\newtheorem{lemma}{Lemma}
\newtheorem{theorem}{Theorem}
\newtheorem{proposition}{Proposition}
\newtheorem{remark}{Remark}
\newtheorem{example}{Example}
\newenvironment{proof}{\noindent{\bf Proof:}}{\hfill\fbox{}\vspace*{1mm}}
\newcommand{\mF}{\mathcal{F}}
\title{Upwind-and-shifted numerical scheme for fractional convection equation}
\author{Lot-Kei Chou,\thanks{Email address: rickyclk16@gmail.com}\and
Wan-Na Deng,\thanks{Email address: mc05564@um.edu.mo}\and
Yuan-Yuan Huang,\thanks{Email address: yc07487@um.edu.mo}\and
Siu-Long Lei\thanks{Email address: sllei@um.edu.mo (Corresponding author)}\\
Department of Mathematics, University of Macau, Macao, China} 
\begin{document}

\maketitle

\begin{abstract}
Fundamental solution of a space fractional convection equation of order $\alpha$ is the probability density function of L\'{e}vy flights with long-tailed $\alpha$-stable jump length distribution. By studying an upwind second-order implicit finite difference scheme for the equation with $\alpha\in(0,1)$, an upwind-and-shifted scheme with order $3-\alpha$ is obtained in this paper, and the scheme is shown to be unconditionally stable for a wide range of $\alpha$. Numerical examples, including simulations on a probability density function, are presented showing the effectiveness of the numerical schemes.
\end{abstract}

\noindent
\textbf{Keywords} Fractional derivative, fractional convection equation, L\'{e}vy flights, generating function, Lerch transcendent, upwind-and-shifted approximation, Toeplitz linear system

\section{Introduction}
For several decades, numerous fractional order models have been suggested to be describing anomalous phenomena appropriately~\cite{SunZhang,Sokolov,Podlubny}, and various numerical methods for solving fractional differential equations (FDEs) have been proposed~\cite{LiZeng,Liu,SunGao}. Often, the fractional orders $\alpha$ in the FDEs are restricted within the interval $(0,2]$, in both theoretical aspect~\cite{Zaslavsky,Feller} and empirical aspect~\cite{Imag,Bio,Hydro,Turb}. FDEs with temporal order $\alpha\in(0,1)$ or spatial order $\alpha\in(1,2)$ have been applied for models such as anomalous diffusion~\cite{Phy}, solute transport in porous media~\cite{Porous}, and magnetic resonance~\cite{MR}, with related numerical methods proposed in~\cite{TFDE2,TFDE1,SFDE2,SFDE1,TSFDE2,TSFDE1}. While FDEs with spatial order $\alpha\in(0,1)$ have been considered for models such as continuous-time random walk~\cite{Meerschaert}, finance~\cite{Finance}, and super-diffusion~\cite{Super}, numerical methods are to be further explored~\cite{FE2,SDO,FE1}.

Recently, due to the important applications of L\'{e}vy flight \cite{Padash}, Jesus and Sousa~\cite{Sousa} demonstrated that the jump distribution function of a class of L\'{e}vy stable processes~\cite{Foge} satisfies a fractional convection equation with order $\alpha\in(0,1)\cup(1,2)$. Then they developed central and upwind approximations to R-L fractional derivatives of such order $\alpha$, and the associated implicit schemes for solving fractional convection equation are shown to be consistent and unconditionally stable in the sense of von Neumann analysis. Their upwind second-order scheme can also effectively simulate the L\'{e}vy-Smirnov distribution of L\'{e}vy flights. Meanwhile, in their proof of the unconditional stability, the key point is to prove that the related generating function, which is an infinite series of trigonometric function, is non-negative, see Lemma \ref{L2}.

In this paper, we first consider the upwind second-order scheme in \cite{Sousa}, and the above mentioned generating function is to be studied based on the Lerch transcendent~\cite{Erdelyi}. As an alternative to the von Neumann sense, the unconditional stability and convergence of the scheme are hence to be verified in the sense that the error in each time level is bounded by the initial error. Benefiting from the intended study about the generating function, a form of upwind-and-shifted approximation is introduced, expecting to obtain an order $3-\alpha$ convergence and the unconditional stability. Indeed, this approach can be viewed as an application of the idea in~\cite{WSGD} to a generating function different from $(1-e^{i\theta})^{\alpha}$. With an alternate form of the generating function, the related numerical stability can be determined more easily. On the other hand, due to the long-tail property of probability density function of L\'{e}vy flights, a large spatial domain for the fractional convection equation is required so that the discrete linear system is very large in order to obtain accurate simulations. To solve the linear system efficiently, iterative methods such as the GMRES method can be utilized due to the Toeplitz structure of the coefficient matrix.

In Section~\ref{sec2}, a second-order approximation of fractional derivatives is reviewed, followed by discussions about upwind-and-shifted approximations. The associated numerical schemes are provided in Section~\ref{sec3}, with the stability and convergence obtained in Section~\ref{sec4}. Numerical results are given in Section~\ref{sec5}.

\section{Approximations of fractional derivatives}\label{sec2}
In this work, we consider the space fractional convection equation with order $\alpha\in(0,1)$:
\begin{equation}\label{fde}
\frac{\partial u(x, t)}{\partial t}=-D\left(\frac{1}{2}(1+\beta) \frac{\partial^{\alpha} u}{\partial x^{\alpha}}(x, t)+\frac{1}{2}(1-\beta) \frac{\partial^{\alpha} u}{\partial(-x)^{\alpha}}(x, t)\right)+q(x, t),
\end{equation}
where $x \in \mathbb{R},~t>0$, $D$ and $\beta$ are constants with $D>0$, $-1\leq\beta\leq 1$, $q(x,t)$ is the source term, and
\begin{align*}
&\frac{\partial^{\alpha} u}{\partial x^{\alpha}}(x, t)=\frac{1}{\Gamma(1-\alpha)} \frac{\partial}{\partial x} \int_{-\infty}^{x} u(s, t)(x-s)^{-\alpha} d s,\\
&\frac{\partial^{\alpha} u}{\partial(-x)^{\alpha}}(x, t)=\frac{-1}{\Gamma(1-\alpha)} \frac{\partial}{\partial x} \int_{x}^{\infty} u(s, t)(s-x)^{-\alpha} d s.
\end{align*}

Denote 
\begin{align*}
V^{\gamma}=\left\{u~\Bigg|~\dfrac{d^{s}u(x)}{d(\pm x)^{s}}\in C(\mathbb{R})\cap L^{1}(\mathbb{R}),~\forall s\in[0,\gamma],\textrm{ and }\right.\\
\left.\lim\limits_{x\to\infty}\dfrac{d^{s}u(x)}{d(\pm x)^{s}}=0,~\forall s\in[0,\gamma-1]\right\}.
\end{align*}

Also, denote $a_{0}=1,~a_{k}=(k+1)^{2-\alpha}-2k^{2-\alpha}+(k-1)^{2-\alpha},~k\geq 1$, and
\begin{align*}
I_{1}(\theta,\alpha)&=a_{0}+\sum\limits_{k=1}^{\infty}(a_{k}-a_{k-1})e^{ik\theta}:=\sum\limits_{k=0}^{\infty}b_{k}e^{ik\theta},\\
I_{2}(\theta,\alpha)&=a_{0}+(a_{1}-2a_{0})e^{i1\theta}+\sum\limits_{k=2}^{\infty}(a_{k}-2a_{k-1}+a_{k-2})e^{ik\theta}:=\sum\limits_{k=0}^{\infty}c_{k}e^{ik\theta},\\
I_{3}(\theta,\alpha)&=a_{0}+(a_{1}-3a_{0})e^{i1\theta}+(a_{2}-3a_{1}+3a_{0})e^{i2\theta}\\
&\qquad+\sum\limits_{k=3}^{\infty}(a_{k}-3a_{k-1}+3a_{k-2}-a_{k-3})e^{ik\theta}:=\sum\limits_{k=0}^{\infty}d_{k}e^{ik\theta}.
\end{align*}

Then the studies about the approximations of the fractional derivatives $\partial^{\alpha}u/\partial(\pm x)^{\alpha}$ are presented in the following context.

\subsection{Upwind approximation}
In~\cite{Sousa}, an upwind and the corresponding downwind second-order approximations are proposed as
\begin{align}\label{2ndapp}
&A_{\alpha,1}^l u(x,t):=\dfrac{1}{2\Gamma(3-\alpha)}\sum\limits_{k=0}^{\infty}(2b_{k}+c_{k})f(x-kh)\approx\dfrac{d^{\alpha}f(x)}{dx^{\alpha}},\nonumber\\
&A_{\alpha,1}^r u(x,t):=\dfrac{1}{2\Gamma(3-\alpha)}\sum\limits_{k=0}^{\infty}(2b_{k}+c_{k})f(x+kh)\approx\dfrac{d^{\alpha}f(x)}{d(-x)^{\alpha}},
\end{align}
with the sequence $\{2b_{k}+c_{k}\}_{k=0}^{\infty}$ satisfies the following property.

\begin{lemma} \label{L2}
For all $\alpha\in(0,1)$, we have 
\[\sum\limits_{k=0}^{\infty}(2b_{k}+c_{k}) \cos (k \theta) \geq 0, \mbox{ for all } \theta\in\mathbb{R}.\]
\end{lemma}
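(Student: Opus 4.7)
The plan is to put the generating function $g(\theta) := \sum_{k}(2b_{k}+c_{k})e^{ik\theta}$ into closed form via the Lerch transcendent and then read off non-negativity of its real part from a polar decomposition.

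First, reformulate the sum. Coefficient-matching in $(3-4z+z^{2})\sum_{k}a_{k}z^{k}$, with the convention $a_{-1}=a_{-2}=0$, identifies $3a_{k}-4a_{k-1}+a_{k-2}$ with $2b_{k}+c_{k}$ for every $k\geq 0$. Setting $z=e^{i\theta}$ and $A(z)=\sum_{k}a_{k}z^{k}$ reduces the claim to $\mathrm{Re}[(1-z)(3-z)A(z)]\geq 0$. Since $b_{k},c_{k}=O(k^{-1-\alpha})$, both $I_{1}$ and $I_{2}$ converge absolutely on $|z|\leq 1$, so no summability subtleties arise at the boundary.

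Second, express $A$ through the Lerch transcendent. The identity $\sum_{k\geq 1}k^{2-\alpha}z^{k}=z\Phi(z,\alpha-2,1)$ on $|z|<1$, together with simple index shifts that absorb the second differences, yields $A(z)=(1-z)^{2}\Phi(z,\alpha-2,1)$. Because $(1-z)^{2}$ cancels the singularity of $\Phi$ at $z=1$, the right-hand side extends continuously to the closed disk, and by Abel's theorem the identity passes to $|z|=1$, giving $(1-z)(3-z)A(z)=(1-z)^{3}(3-z)\Phi(z,\alpha-2,1)$ there. Jonquière's inversion formula then rewrites $\mathrm{Li}_{\alpha-2}(e^{i\theta})=z\Phi(z,\alpha-2,1)$ as a linear combination of Hurwitz zetas $Z_{1}:=\zeta(3-\alpha,\xi)$ and $Z_{2}:=\zeta(3-\alpha,1-\xi)$ with $\xi=\theta/(2\pi)$. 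Substituting the polar forms $(1-e^{i\theta})^{3}=8i\sin^{3}(\theta/2)e^{3i\theta/2}$ and $3-e^{i\theta}=|3-e^{i\theta}|e^{-i\psi(\theta)}$, where $\psi(\theta)=\arctan(\sin\theta/(3-\cos\theta))$, and collecting terms produces
\[\sum_{k}(2b_{k}+c_{k})\cos(k\theta)=C(\theta)\Bigl\{\cos\phi\cos(\pi\alpha/2)(Z_{1}-Z_{2})+\sin\phi\sin(\pi\alpha/2)(Z_{1}+Z_{2})\Bigr\},\]
with $C(\theta)=8\sin^{3}(\theta/2)|3-e^{i\theta}|\Gamma(3-\alpha)(2\pi)^{\alpha-3}$ and $\phi(\theta)=\theta/2-\psi(\theta)$.

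Finally, read off positivity. By evenness and $2\pi$-periodicity of the series it suffices to take $\theta\in[0,\pi]$, where $C(\theta)\geq 0$. The elementary inequality $3-\cos\theta\geq 1+\cos\theta$ (equivalent to $\cos\theta\leq 1$) gives $\tan\psi\leq\tan(\theta/2)$, placing $\phi\in[0,\pi/2]$ and hence $\cos\phi,\sin\phi\geq 0$. For $\alpha\in(0,1)$, both $\cos(\pi\alpha/2)$ and $\sin(\pi\alpha/2)$ are positive. Because $3-\alpha>2$, the Hurwitz zeta $\zeta(3-\alpha,\cdot)$ is strictly decreasing, so $\xi\leq 1-\xi$ (i.e.\ $\theta\leq\pi$) forces $Z_{1}\geq Z_{2}>0$. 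Every factor in each of the two summands is non-negative, yielding the claim; the boundary $\theta=0$ is handled directly by the telescoping identity $\sum(2b_{k}+c_{k})=0$.

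The main obstacle will be the branch-cut bookkeeping inside Jonquière's inversion --- confirming that $i\cdot e^{\pm i\pi(3-\alpha)/2}$ simplifies to $e^{\mp i\pi\alpha/2}$ with the signs that allow the clean bracketed form above --- together with the Abel-type justification of the passage of $A(z)=(1-z)^{2}\Phi(z,\alpha-2,1)$ from $|z|<1$ to $|z|=1$. With those in hand, positivity becomes transparent, resting only on monotonicity of the Hurwitz zeta in its second argument and the trivial inequality $\cos\theta\leq 1$.
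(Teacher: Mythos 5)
Your proposal is correct and takes essentially the same route as the paper: both reduce the series to $(1-e^{i\theta})^{3}(3-e^{i\theta})\Phi(e^{i\theta},\alpha-2,1)$, apply the Lerch/Hurwitz continuation formula, and conclude from the decomposition into a $\cos(\pi\alpha/2)(Z_{1}-Z_{2})$ term and a $\sin(\pi\alpha/2)(Z_{1}+Z_{2})$ term with non-negative trigonometric prefactors (your bracket, with $i e^{i\pi(3-\alpha)/2}=e^{-i\pi\alpha/2}$ and $i e^{-i\pi(3-\alpha)/2}=-e^{i\pi\alpha/2}$, matches the paper's $g_{11}H_{1}+g_{12}H_{2}$ exactly). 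The only real difference is technical: you pass to the unit circle by continuity in $z$ (absolute convergence of $\sum(2b_{k}+c_{k})z^{k}$ plus the continuation of $\Phi$ off $[1,\infty)$), whereas the paper fixes $\theta$ and analytically continues in $\alpha$ via the identity theorem.
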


Here, we present a proof of this property based on the Lerch transcendent. Indeed, we need the following two propositions just before the proof.

\begin{proposition}\label{lt}
The Lerch transcendent $\Phi(z,s,\nu)=\sum_{k=0}^{\infty}(k+\nu)^{-s}z^{k}$ with $z\in\mathbb{C}\setminus[1,\infty)$ has the following analytic continuation with respect to $s$ (\cite{Erdelyi}, 1.11).
\begin{align*}
\begin{cases}
\Phi\vert_{S_{1}}(z,s,\nu)=z^{-\nu}\Gamma(1-s)\sum\limits_{n=-\infty}^{\infty}(-\log z+2n\pi i)^{s-1}e^{2n\pi i\nu},\\
\textrm{for }\lvert\arg(-\log z+2n\pi i)\rvert\leq\pi,~0<\nu\leq 1,~s\in S_{1}=\{s\in\mathbb{C}\big\vert\textrm{Re}~s<0\};\\
\Phi\vert_{S_{2}}(z,s,\nu)=\dfrac{\Gamma(1-s)}{z^{\nu}}\left(-\log z\right)^{s-1}+z^{-\nu}\sum\limits_{r=0}^{\infty}\zeta(s-r,\nu)\dfrac{(\log z)^{r}}{r!},\\
\textrm{for }\lvert\log z\rvert<2\pi,~\nu\neq 0,-1,-2,\ldots,~s\in S_{2}=\{s\in\mathbb{C}\big\vert s\neq 1,2,3,\ldots\}.
\end{cases}
\end{align*}
\end{proposition}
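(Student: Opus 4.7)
My plan is to derive both formulas from the Mellin-integral representation
$$\Phi(z,s,\nu) = \frac{1}{\Gamma(s)} \int_0^\infty \frac{t^{s-1}\,e^{-\nu t}}{1-ze^{-t}}\,dt,$$
valid for $\mathrm{Re}(s)>0$, $0<\nu\le 1$, and $z\in\mathbb{C}\setminus[1,\infty)$. This follows by substituting $(k+\nu)^{-s} = \Gamma(s)^{-1}\int_0^\infty t^{s-1}e^{-(k+\nu)t}\,dt$ into the defining series $\Phi(z,s,\nu)=\sum_{k=0}^\infty (k+\nu)^{-s} z^k$ and interchanging sum with integral, justified by dominated convergence for $|z|<1$ and then extended to $z\in\mathbb{C}\setminus[1,\infty)$ by analytic continuation in $z$.

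For the first formula, I would rewrite the Mellin integral as a Hankel-contour integral
$$\Phi(z,s,\nu) = -\frac{\Gamma(1-s)}{2\pi i}\int_{H} \frac{(-t)^{s-1}\,e^{-\nu t}}{1-ze^{-t}}\,dt,$$
where $H$ wraps the positive real axis, using the reflection formula $\Gamma(s)\Gamma(1-s)=\pi/\sin(\pi s)$ together with the standard branch-cut identity $\int_H (-t)^{s-1}\phi(t)\,dt = -2i\sin(\pi s)\int_0^\infty t^{s-1}\phi(t)\,dt$ for $\phi$ analytic across the cut. The Hankel form is meromorphic in $s$ with singularities only at $s=1,2,\ldots$, which already gives the $S_2$-continuation of the right-hand side. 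To obtain the explicit $S_1$-formula with $\mathrm{Re}(s)<0$, I would then deform $H$ outward across the simple poles $t_n = \log z + 2n\pi i$ of $1/(1-ze^{-t})$; a direct residue computation gives $(-t_n)^{s-1}e^{-\nu t_n} = (-\log z - 2n\pi i)^{s-1} z^{-\nu} e^{-2n\pi i\nu}$, and after relabeling $n\to -n$ the sum reproduces the stated series, with the branch convention $|\arg(-\log z + 2n\pi i)|\le\pi$ inherited from the principal branch of $(-t)^{s-1}$ used in the Hankel integral.

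For the second formula, I would isolate the behaviour of the Mellin integrand near its pole at $t=\log z$. Setting $w=\log z$ with $|w|<2\pi$ and expanding $1-ze^{-t} = 1 - e^{w-t}$ about $t=w$, the leading singular contribution to the integral is $z^{-\nu}\Gamma(1-s)(-w)^{s-1}$, which is exactly the first term of the $S_2$-formula. The remainder $R(z,s,\nu) := z^{\nu}\Phi(z,s,\nu) - \Gamma(1-s)(-\log z)^{s-1}$ is then analytic in $w$ on $|w|<2\pi$, the nearest remaining singularities being the poles $t_n$ with $n\ne 0$ sitting on $|w|=2\pi$. Taylor expanding $R$ about $w=0$ and identifying each coefficient by repeated $w$-differentiation under the integral followed by evaluation at $z=1$ yields $\Phi(1, s-r, \nu) = \zeta(s-r,\nu)$, producing the series $z^{-\nu}\sum_{r\ge 0}\zeta(s-r,\nu)(\log z)^r/r!$.

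The main technical obstacle is justifying the contour deformation in the first part: one must verify that the integrand $(-t)^{s-1}e^{-\nu t}/(1-ze^{-t})$ decays uniformly along the enlarged contour so that no spurious contribution appears at infinity, and the restriction $\mathrm{Re}(s)<0$ is precisely what ensures the algebraic decay of $(-t)^{s-1}$ dominates on the portion of the contour in the left half-plane where $e^{-\nu t}$ grows. A secondary subtlety is the branch bookkeeping, so that each residue truly produces $(-\log z + 2n\pi i)^{s-1}$ in the branch specified by $|\arg(-\log z + 2n\pi i)|\le\pi$. For the second formula, the delicate point is extracting the singular piece $\Gamma(1-s)(-\log z)^{s-1}$ precisely enough that the remainder is genuinely holomorphic in $w$ on the full disk $|w|<2\pi$ — not merely bounded — which is what legitimises the termwise identification of its Taylor coefficients with analytically continued Hurwitz zeta values $\zeta(s-r,\nu)$.
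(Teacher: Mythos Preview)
The paper does not prove this proposition at all; it is stated as a classical result with a direct citation to Erd\'{e}lyi's \emph{Higher Transcendental Functions}, \S1.11, and is used as a black box in the subsequent arguments. Consequently there is no ``paper's own proof'' to compare against.

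That said, your sketch follows the standard derivation one finds in Erd\'{e}lyi or in Whittaker--Watson for the Hurwitz case: the Mellin integral, its Hankel-contour rewriting via the reflection formula, and then either (i) collecting residues at the poles $t_n=\log z+2n\pi i$ to obtain the $S_1$-expansion, or (ii) isolating the $n=0$ pole and expanding the remainder to obtain the $S_2$-expansion. The broad strategy is sound. Two places deserve more care if you intend this as a genuine proof rather than a heuristic. First, in the $S_1$ part, the contour deformation is not simply ``outward'': one typically replaces the Hankel loop by a sequence of expanding rectangular (or circular) contours whose horizontal sides pass between consecutive poles $t_n$, and checks that those side contributions vanish as the contour grows; the condition $\mathrm{Re}\,s<0$ controls the decay of $(-t)^{s-1}$ while $0<\nu\leq 1$ controls $e^{-\nu t}$ on the relevant arcs. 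Second, in the $S_2$ part, your claim that the remainder $R$ is analytic in $w=\log z$ on $|w|<2\pi$ is the crux, but your justification conflates singularities in the $t$-plane with singularities in the $w$-plane. The cleaner classical route is to take the Hankel contour as a small circle $|t|=\rho$ (with $\rho<|\log z|$) together with the two real segments, expand $e^{-\nu t}/(1-ze^{-t})$ as a Laurent series in $t$ whose coefficients are analytic in $w$, integrate term by term against $(-t)^{s-1}$, and identify the coefficients with the Hurwitz values $\zeta(s-r,\nu)$; the singular term then emerges from the simple pole of the integrand in $t$ rather than from a separate residue extraction in the $w$-variable.
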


Denote $E_{s}=\{z\in\mathbb{C}\big\vert\textrm{Re}~z>s,~z\neq 3,4,5,\ldots\},~\tilde{E}_{s}=\{z\in\mathbb{C}\big\vert\textrm{Re}~z<s\}$, and the expressions
\begin{align*}
&J_{1}(\theta,\alpha)=8ie^{\frac{i\theta}{2}}\sin^{3}\dfrac{\theta}{2}e^{i\theta}\Phi\vert_{S_{1}}(e^{i\theta},\alpha-2,1),\\
&J_{2}(\theta,\alpha)=16e^{i\theta}\sin^{4}\dfrac{\theta}{2}e^{i\theta}\Phi\vert_{S_{1}}(e^{i\theta},\alpha-2,1),\\
&K_{1}(\theta,\alpha)=8ie^{\frac{i\theta}{2}}\sin^{3}\dfrac{\theta}{2}e^{i\theta}\Phi\vert_{S_{2}}(e^{i\theta},\alpha-2,1),\\
&K_{2}(\theta,\alpha)=16e^{i\theta}\sin^{4}\dfrac{\theta}{2}e^{i\theta}\Phi\vert_{S_{2}}(e^{i\theta},\alpha-2,1).
\end{align*}

With Proposition~\ref{lt}, we have the following proposition.

\begin{proposition}\label{ac}
For each fixed $\theta\in(0,\pi]$,
\begin{enumerate}
\item[(i)]
$I_{1}(\theta,\alpha)$ and $I_{2}(\theta,\alpha)$ are analytic on $E_{0}$ with respect to $\alpha$.
\item[(ii)]
For $\alpha\in(0,1)$, 
\begin{enumerate}
\abovedisplayskip=-\baselineskip
\item
\begin{align*}
I_{1}(\theta,\alpha)&=J_{1}(\theta,\alpha)\\
&=8ie^{\frac{i\theta}{2}}\sin^{3}\dfrac{\theta}{2}\Gamma(3-\alpha)\times\\
&\quad\sum\limits_{n=0}^{\infty}\left((2n\pi+\theta)^{\alpha-3}e^{\frac{i(3-\alpha)\pi}{2}}+(2(n+1)\pi-\theta)^{\alpha-3}e^{-\frac{i(3-\alpha)\pi}{2}}\right);
\end{align*}
\item
\begin{align*}
I_{2}(\theta,\alpha)&=J_{2}(\theta,\alpha)\\
&=16e^{i\theta}\sin^{4}\dfrac{\theta}{2}\Gamma(3-\alpha)\times\\
&\quad\sum\limits_{n=0}^{\infty}\left((2n\pi+\theta)^{\alpha-3}e^{\frac{i(3-\alpha)\pi}{2}}+(2(n+1)\pi-\theta)^{\alpha-3}e^{-\frac{i(3-\alpha)\pi}{2}}\right);
\end{align*}
\item
\begin{align*}
I_{1}(\theta,\alpha)&=K_{1}(\theta,\alpha)\\
&=8ie^{\frac{i\theta}{2}}\sin^{3}\dfrac{\theta}{2}\left(\Gamma(3-\alpha)\left(-i\theta\right)^{\alpha-3}+\sum\limits_{r=0}^{\infty}\zeta(\alpha-2-r)\dfrac{(i\theta)^{r}}{r!}\right);
\end{align*}
\item
\begin{align*}
I_{2}(\theta,\alpha)&=K_{2}(\theta,\alpha)\\
&=16e^{i\theta}\sin^{4}\dfrac{\theta}{2}\left(\Gamma(3-\alpha)\left(-i\theta\right)^{\alpha-3}+\sum\limits_{r=0}^{\infty}\zeta(\alpha-2-r)\dfrac{(i\theta)^{r}}{r!}\right).
\end{align*}
\end{enumerate}
\end{enumerate}
\end{proposition}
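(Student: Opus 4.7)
The plan is to first derive, on an initial region where both sides converge absolutely, the closed forms
\[ I_1(\theta,\alpha)=(1-e^{i\theta})^3\Phi(e^{i\theta},\alpha-2,1), \qquad I_2(\theta,\alpha)=(1-e^{i\theta})^4\Phi(e^{i\theta},\alpha-2,1), \]
and then read off parts (i) and (ii) by substituting the two analytic continuations of $\Phi$ supplied by Proposition \ref{lt}. Since $\{b_k\}$ and $\{c_k\}$ are the first and second forward differences of $\{a_k\}$, one has $\sum_k b_k z^k=(1-z)\sum_k a_k z^k$ and $\sum_k c_k z^k=(1-z)^2\sum_k a_k z^k$. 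Using the shifts $\sum_{k=1}^{\infty}(k+1)^{2-\alpha}z^k=\Phi(z,\alpha-2,1)-1$, $\sum_{k=1}^{\infty}k^{2-\alpha}z^k=z\Phi(z,\alpha-2,1)$, and $\sum_{k=1}^{\infty}(k-1)^{2-\alpha}z^k=z^2\Phi(z,\alpha-2,1)$ (valid at $z=e^{i\theta}$, $\theta\in(0,\pi]$, as soon as $\operatorname{Re}\alpha>3$), one gets $\sum_k a_k z^k=(1-z)^2\Phi(z,\alpha-2,1)$ and hence the two closed forms on that strip.

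For part (i), a Taylor expansion of $k^{2-\alpha}$ about $k$ yields $a_k=(2-\alpha)(1-\alpha)k^{-\alpha}+O(k^{-\alpha-2})$, so that $b_k=O(k^{-\alpha-1})$ and $c_k=O(k^{-\alpha-2})$, with constants uniform for $\alpha$ in compact subsets of $\{\operatorname{Re}\alpha>0\}$. By the Weierstrass theorem, $I_1$ and $I_2$ are analytic on $\{\operatorname{Re}\alpha>0\}\supset E_0$. Proposition \ref{lt} supplies $\Phi(e^{i\theta},\alpha-2,1)$ as an analytic function of $\alpha$ on $E_0$ through the $S_2$-representation, whose excluded set $\alpha\in\{3,4,\ldots\}$ matches $s=\alpha-2\in\{1,2,\ldots\}$. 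Since both sides of the closed-form identity are analytic on $E_0$ and agree on the strip $\operatorname{Re}\alpha>3$, analytic continuation extends the identity to all of $E_0$, which proves (i).

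For part (ii), fix $\alpha\in(0,1)$ and $\theta\in(0,\pi]$. Then $s=\alpha-2\in(-2,-1)\subset S_1\cap S_2$, $|\log e^{i\theta}|=\theta<2\pi$, and $|\arg(-i\theta+2n\pi i)|=\pi/2\leq\pi$, so both continuations in Proposition \ref{lt} apply. For the $K$-formulas, substitute
\[ \Phi|_{S_2}(e^{i\theta},\alpha-2,1)=e^{-i\theta}\!\left(\Gamma(3-\alpha)(-i\theta)^{\alpha-3}+\sum_{r=0}^{\infty}\zeta(\alpha-2-r)\frac{(i\theta)^r}{r!}\right) \]
into the closed form and use $(1-e^{i\theta})^3=8ie^{3i\theta/2}\sin^3(\theta/2)$ and $(1-e^{i\theta})^4=16e^{2i\theta}\sin^4(\theta/2)$; the $e^{-i\theta}$ collapses $e^{3i\theta/2}\mapsto e^{i\theta/2}$ and $e^{2i\theta}\mapsto e^{i\theta}$, matching the prefactors of $K_1$ and $K_2$ after rewriting them as $e^{i\theta/2}\cdot e^{i\theta}$ and $e^{i\theta}\cdot e^{i\theta}$. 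For the $J$-formulas, substitute $\Phi|_{S_1}$ and split the bilateral sum $\sum_{n\in\mathbb{Z}}(i(2n\pi-\theta))^{\alpha-3}$ by the sign of $2n\pi-\theta$: the terms with $n\geq 1$ have argument $+\pi/2$ and contribute $(2n\pi-\theta)^{\alpha-3}e^{-i(3-\alpha)\pi/2}$, while the terms with $n\leq 0$ have argument $-\pi/2$ and contribute $(2|n|\pi+\theta)^{\alpha-3}e^{i(3-\alpha)\pi/2}$; reindexing on each piece merges the bilateral sum into the one-sided sum over $n\geq 0$ of $J_1$ and $J_2$.

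The main obstacle is the branch bookkeeping in the $S_1$ substitution: each $(i(2n\pi-\theta))^{\alpha-3}$ must be read on the branch fixed by $|\arg(-\log z+2n\pi i)|\leq\pi$, and the reindexing of the $n\leq -1$ piece must be carried out without sign errors so that the bilateral sum collapses cleanly into the one-sided sum of the statement. The generating-function reduction and the decay estimates for $b_k,c_k$ underlying analyticity are routine by comparison.
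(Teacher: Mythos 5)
Your proposal is correct and follows essentially the same route as the paper: it establishes $I_1=(1-e^{i\theta})^3\Phi(e^{i\theta},\alpha-2,1)$ and $I_2=(1-e^{i\theta})^4\Phi(e^{i\theta},\alpha-2,1)$ where the defining series converge ($\mathrm{Re}\,\alpha>3$), proves analyticity of $I_1,I_2$ on $E_0$ by bounding finite differences of $k^{2-\alpha}$ (the paper does this via the mean value theorem for divided differences, you via a Taylor expansion --- the same estimate), and then extends the identities to $E_0$ by the identity theorem before substituting the two continuations of $\Phi$ from Proposition~\ref{lt}. Your explicit prefactor simplifications and the sign/branch bookkeeping in splitting the bilateral $S_1$ sum are details the paper leaves implicit, and they check out.
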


\begin{proof}
For part (i), denote
\begin{align*}
I_{1}(\theta,\alpha)=1+&(2^{2-\alpha}-3)e^{i\theta}+(3^{2-\alpha}-3\cdot 2^{2-\alpha}+3)e^{i2\theta}+\sum\limits_{k=3}^{\infty}b_{k}e^{ik\theta}\\
:=1+&(2^{2-\alpha}-3)e^{i\theta}+(3^{2-\alpha}-3\cdot 2^{2-\alpha}+3)e^{i2\theta}+\sum\limits_{k=3}^{\infty}f_{k}(\alpha)e^{ik\theta}.
\end{align*}

Also, denote $\sigma=\textrm{Re}~\alpha$ and $\tau=\textrm{Im}~\alpha$. Then for $k\geq 3$, by mean value theorem for divided difference, there is some $\xi_{k},\eta_{k}\in(k-2,k+1)$ such that
\begin{align*}
&\left\vert\textrm{Re}(f_{k}(\alpha))\right\vert\\
&=\left\vert(k+1)^{2-\sigma}\cos(\tau\log(k+1))-3k^{2-\sigma}\cos(\tau\log k)\right.\\
&\qquad\left.+3(k-1)^{2-\sigma}\cos(\tau\log(k-1))-(k-2)^{2-\sigma}\cos(\tau\log(k-2))\right\vert\\
&=\left\vert\dfrac{d^{3}}{dx^{3}}(x^{2-\sigma}\cos(\tau\log x))\bigg\vert_{x=\xi_{k}}\right\vert\\
&\leq C_{1}(\alpha)\xi_{k}^{-1-\sigma},
\end{align*}\
and
\begin{align*}
&\left\vert\textrm{Im}(f_{k}(\alpha))\right\vert\\
&=\left\vert(k+1)^{2-\sigma}\sin(\tau\log(k+1))-3k^{2-\sigma}\sin(\tau\log k)\right.\\
&\qquad\left.+3(k-1)^{2-\sigma}\sin(\tau\log(k-1))-(k-2)^{2-\sigma}\sin(\tau\log(k-2))\right\vert\\
&=\left\vert\dfrac{d^{3}}{dx^{3}}(x^{2-\sigma}\sin(\tau\log x))\bigg\vert_{x=\eta_{k}}\right\vert\\
&\leq C_{2}(\alpha)\eta_{k}^{-1-\sigma},
\end{align*}
for some continuous functions $C_{1},~C_{2}$ on $\mathbb{C}$.

Then for $\sigma>0$,
\begin{equation*}
\vert f_{k}(\alpha)e^{i\theta}\vert\leq(C_{1}(\alpha)+C_{2}(\alpha))(k-2)^{-1-\sigma}.
\end{equation*}

As $C_{1}+C_{2}$ is also continuous on $\mathbb{C}$, and $\sum_{k=1}^{\infty}k^{-s}$ converges for $s>1$, we have for each fixed $\theta$, the series $I_{1}(\theta,\alpha)$ converges uniformly on any compact subset of $\{\alpha\in\mathbb{C}\big\vert\textrm{Re}~\alpha>0\}$. In addition, each term in $I_{1}(\theta,\alpha)$ is analytic for $\alpha\in\mathbb{C}$. Therefore $I_{1}(\theta,\alpha)$ is analytic for $\textrm{Re}~\alpha>0$, in particular for $\alpha\in E_{0}$. Similarly, $I_{2}(\theta,\alpha)$ is analytic for $\alpha\in E_{0}$.

For part (ii), we first point out that,
\begin{itemize}
\item
from $I_{1}(\theta,\alpha)$ and $K_{1}(\theta,\alpha)$ being analytic for $\alpha\in E_{0}$, and $I_{1}(\theta,\alpha)=K_{1}(\theta,\alpha)$ for $\alpha\in E_{3}$ by rearranging $I_{1}(\theta,\alpha)$ as $(1-e^{i\theta})^{3}\Phi(e^{i\theta},\alpha-2,1)$ without affecting convergence, we have $I_{1}(\theta,\alpha)=K_{1}(\theta,\alpha)$ for $\alpha\in E_{0}$ (\cite{Lang}, Chp. III, Theorem 1.2);
\item
$J_{1}(\theta,\alpha)$ and $K_{1}(\theta,\alpha)$ are analytic and $J_{1}(\theta,\alpha)=K_{1}(\theta,\alpha)$ for $\alpha\in\tilde{E}_{2}$.
\end{itemize}
From these two points, we have $I_{1}(\theta,\alpha)=J_{1}(\theta,\alpha)=K_{1}(\theta,\alpha)$ for $\alpha\in E_{0}\cap\tilde{E}_{2}\supset(0,1)$. Similarly, we have $I_{2}(\theta,\alpha)=J_{2}(\theta,\alpha)=K_{2}(\theta,\alpha)$ for $\alpha\in E_{0}\cap\tilde{E}_{2}\supset(0,1)$.\vspace{5pt}
\end{proof}

With Proposition~\ref{ac}, we are ready to prove Lemma~\ref{L2}.

\begin{proof}
Denote $g_{1}(\theta,\alpha)=\sum_{k=0}^{\infty}(2b_{k}+c_{k})e^{ik\theta}$, for fixed $\theta\in(0,\pi]$, we have 
\begin{equation*}
g_{1}\vert_{(0,1)}(\theta,\alpha)=2I_{1}\vert_{(0,1)}(\theta,\alpha)+I_{2}\vert_{(0,1)}(\theta,\alpha).
\end{equation*}

Then from $I_{1}\vert_{(0,1)}(\theta,\alpha)=J_{1}\vert_{(0,1)}(\theta,\alpha)$ and $I_{2}\vert_{(0,1)}(\theta,\alpha)=J_{2}\vert_{(0,1)}(\theta,\alpha)$ by Proposition~\ref{ac}, and with elementary simplifications, we have
\begin{align}\label{g1}
&2\sum\limits_{k=0}^{\infty}(2b_{k}+c_{k})\cos(k\theta)\nonumber\\
&=g_{1}\vert_{(0,1)}(\theta,\alpha)+\bar{g}_{1}\vert_{(0,1)}(\theta,\alpha)\nonumber\\
&=2(J_{1}\vert_{(0,1)}(\theta,\alpha)+\bar{J_{1}}\vert_{(0,1)}(\theta,\alpha))+(J_{2}\vert_{(0,1)}(\theta,\alpha)+\bar{J_{2}}\vert_{(0,1)}(\theta,\alpha))\nonumber\\
&=32\sin^{3}\dfrac{\theta}{2}\Gamma(3-\alpha)\Big(g_{11}(\theta)H_{1}(\theta,\alpha)+g_{12}(\theta)H_{2}(\theta,\alpha)\Big),
\end{align}
where
\begin{align*}
&g_{11}(\theta)=\cos\dfrac{\theta}{2}+\sin\dfrac{\theta}{2}\sin\theta\geq 0,\\
&g_{12}(\theta)=\sin\dfrac{\theta}{2}-\sin\dfrac{\theta}{2}\cos\theta>0,\\
&H_{1}(\theta,\alpha)=\cos\dfrac{\alpha\pi}{2}\sum\limits_{n=0}^{\infty}\left((2n\pi+\theta)^{\alpha-3}-(2(n+1)\pi-\theta)^{\alpha-3}\right)>0,\\
&H_{2}(\theta,\alpha)=\sin\dfrac{\alpha\pi}{2}\sum\limits_{n=0}^{\infty}\left((2n\pi+\theta)^{\alpha-3}+(2(n+1)\pi-\theta)^{\alpha-3}\right)>0.
\end{align*}

Hence $\sum_{k=0}^{\infty}(2b_{k}+c_{k})\cos(k\theta)\geq 0,~\forall\theta\in\mathbb{R}$, by continuity, symmetry, and periodicity.
\end{proof}

\begin{remark}
Besides inspecting the partial sum, taking $\theta\to 0^{+}$ in~\ref{g1} also reads $\sum_{k=0}^{\infty}(2b_{k}+c_{k})=0$.
\end{remark}

\begin{remark}
In particular, for $\alpha\in[0.263, 0.577]$, $b_k\leq0$, for all $k\geq 1$, and then the above inequality can be proved directly as
\begin{equation*}
\sum_{k=0}^{\infty} (2b_k+c_k)\cos(k\theta) \geq 2b_0+c_0-\sum_{k=1}^{\infty} \lvert (2b_{k}+c_{k})\rvert=\sum_{k=0}^{\infty}(2b_k+c_k)=0.
\end{equation*}
\end{remark}

Proposition~\ref{ac} also relates to the consistency of the approximations in~(\ref{2ndapp}), as described in the following proposition.

\begin{proposition}\label{cs}
If $f\in V^{\gamma}$ for some $\gamma>3+\alpha$, then for all $x\in\mathbb{R}$, 
\begin{align*}
\left\vert\dfrac{d^{\alpha} f}{d x^{\alpha}}(x)-A_{\alpha,1}^l f(x)\right\vert\leq M_{1}h^{2}\quad\textrm{and}\quad\left\vert\frac{d^{\alpha} f}{d (-x)^{\alpha}}(x)-A_{\alpha,1}^r f(x)\right\vert\leq M_{1}h^{2}
\end{align*}
for some $M_{1}>0$ independent of $x,h$.
\end{proposition}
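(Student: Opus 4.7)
The plan is to argue via Fourier transform. Since $f\in V^{\gamma}$ with $\gamma>3+\alpha$ provides enough integrability and smoothness, both $f$ and its fractional derivative $d^{\alpha}f/dx^{\alpha}$ lie in $L^{1}(\mathbb{R})\cap C(\mathbb{R})$, and $\hat{f}$ has sufficient decay at infinity. The exact operator $d^{\alpha}/dx^{\alpha}$ has Fourier symbol $(i\xi)^{\alpha}$, and because $\widehat{f(\cdot-kh)}(\xi)=e^{-ikh\xi}\hat{f}(\xi)$ the discrete operator reads $\widehat{A_{\alpha,1}^{l}f}(\xi)=h^{-\alpha}\sigma(-h\xi)\hat{f}(\xi)$ with
\begin{equation*}
\sigma(\theta):=\dfrac{2I_{1}(\theta,\alpha)+I_{2}(\theta,\alpha)}{2\Gamma(3-\alpha)},
\end{equation*}
the $h^{-\alpha}$ being the normalization implicit in the scheme. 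The pointwise error is therefore
\begin{equation*}
\dfrac{d^{\alpha}f}{dx^{\alpha}}(x)-A_{\alpha,1}^{l}f(x)=\dfrac{1}{2\pi}\int_{\mathbb{R}}e^{ix\xi}\bigl[(i\xi)^{\alpha}-h^{-\alpha}\sigma(-h\xi)\bigr]\hat{f}(\xi)\,d\xi,
\end{equation*}
so it suffices to prove a uniform symbol estimate $|(i\xi)^{\alpha}-h^{-\alpha}\sigma(-h\xi)|\leq M|\xi|^{\alpha+2}h^{2}$.

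The core calculation is an asymptotic expansion of $\sigma(\theta)$ near $\theta=0$, obtained from Proposition~\ref{ac}(ii)(c)(d). I would expand the smooth prefactors $8ie^{i\theta/2}\sin^{3}(\theta/2)$ and $16e^{i\theta}\sin^{4}(\theta/2)$ in powers of $\theta$, starting at $\theta^{3}$ and $\theta^{4}$ respectively, multiply each by $\Gamma(3-\alpha)(-i\theta)^{\alpha-3}+\sum_{r\geq 0}\zeta(\alpha-2-r)(i\theta)^{r}/r!$, and collect like powers. The $\theta^{\alpha}$ coefficient of $(2I_{1}+I_{2})/(2\Gamma(3-\alpha))$ should come out to $(-i\theta)^{\alpha}$, so that after $\theta\mapsto -h\xi$ it produces $(i\xi)^{\alpha}h^{\alpha}$ and cancels the continuum symbol exactly. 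The crucial algebraic point is that the next-order $\theta^{\alpha+1}$ contribution picked up from $2I_{1}$ (with weight $2$) cancels against the $\theta^{\alpha+1}$ contribution from $I_{2}$, so the first surviving correction is of order $|\theta|^{\alpha+2}$. Since $\alpha+2<3$ for $\alpha\in(0,1)$, this residual also dominates the purely analytic corrections from the $\zeta$-series (which only enter at $\theta^{3}$ or higher) as well as the higher-order terms of the singular part, yielding the desired bound throughout the low-frequency regime $|h\xi|\leq\pi$.

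For $|h\xi|>\pi$, one uses instead that $\sigma$ is continuous and $2\pi$-periodic, hence bounded by some constant $C$; then $h^{-\alpha}|\sigma(-h\xi)|\leq C\pi^{-\alpha}|\xi|^{\alpha}$ and $|(i\xi)^{\alpha}|=|\xi|^{\alpha}$, and combining with the elementary inequality $1\leq\pi^{-2}(h|\xi|)^{2}$ again yields $|(i\xi)^{\alpha}-h^{-\alpha}\sigma(-h\xi)|\leq M|\xi|^{\alpha+2}h^{2}$. Substituting back,
\begin{equation*}
\left|\dfrac{d^{\alpha}f}{dx^{\alpha}}(x)-A_{\alpha,1}^{l}f(x)\right|\leq\dfrac{Mh^{2}}{2\pi}\int_{\mathbb{R}}|\xi|^{\alpha+2}|\hat{f}(\xi)|\,d\xi,
\end{equation*}
and the assumption $f\in V^{\gamma}$ with $\gamma>3+\alpha$ ensures that $|\xi|^{\gamma}|\hat{f}(\xi)|$ is uniformly bounded (because $d^{\gamma}f/dx^{\gamma}\in L^{1}$ so its transform lies in $L^{\infty}$), making the last integral finite and producing the constant $M_{1}$ independent of $x$ and $h$. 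The estimate for $A_{\alpha,1}^{r}$ follows by the reflection $x\mapsto -x$, which interchanges the two one-sided derivatives and the two one-sided approximations.

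The step I expect to be the main obstacle is the explicit algebraic verification that the $\theta^{\alpha+1}$ terms in $2I_{1}$ and $I_{2}$ cancel under the weighted combination $(2I_{1}+I_{2})/(2\Gamma(3-\alpha))$; this cancellation is the algebraic mechanism that raises the consistency order from $O(h)$ to $O(h^{2})$ and explains the specific weights $(2,1)$ in the scheme. Everything else reduces to standard Fourier-multiplier estimates and the usual link between $V^{\gamma}$-smoothness and the decay of $\hat{f}$.
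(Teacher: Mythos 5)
Your proposal is correct and follows essentially the same route as the paper: a Fourier-multiplier estimate with the symbol error bounded by $C|\theta|^{2+\alpha}$ near $\theta=0$ via the Lerch-transcendent expansion of Proposition~\ref{ac}(ii)(c)(d) (the cancellation you flag is exactly the fact that the prefactor $3e^{i\theta/2}-e^{i3\theta/2}=2+O(\theta^{2})$ has no linear term), boundedness and periodicity of the symbol for large $|\theta|$, and the decay $|\hat{f}(\xi)|\lesssim(1+|\xi|)^{-\gamma}$ from $f\in V^{\gamma}$ to make the inverted integral converge. Apart from an immaterial difference in Fourier-transform sign convention, this is the paper's proof.
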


\begin{proof}
We prove for the left-sided case here, the right-sided case is similar.

Consider $g_{1}(\theta,\alpha)=\sum_{k=0}^{\infty}(2b_{k}+c_{k})e^{ik\theta}$ as mentioned in the proof of Lemma~\ref{L2}, we first have $g_{1}\vert_{(0,1)}(0,\alpha)=2I_{1}\vert_{(0,1)}(0,\alpha)+I_{2}\vert_{(0,1)}(0,\alpha)=0$.

For $0<\vert\theta\vert<\theta_{0}$ for small $\theta_{0}$, we have $I_{1}\vert_{(0,1)}(\theta,\alpha)=K_{1}\vert_{(0,1)}(\theta,\alpha)$ and $I_{2}\vert_{(0,1)}(\theta,\alpha)=K_{2}\vert_{(0,1)}(\theta,\alpha)$ by Proposition~\ref{ac}, then
\begin{align*}
&g_{1}\vert_{(0,1)}(\theta,\alpha)\\
&=2I_{1}\vert_{(0,1)}(\theta,\alpha)+I_{2}\vert_{(0,1)}(\theta,\alpha)\\
&=2K_{1}\vert_{(0,1)}(\theta,\alpha)+K_{2}\vert_{(0,1)}(\theta,\alpha)\\
&=\left(\dfrac{\sin\frac{\theta}{2}}{\frac{\theta}{2}}\right)^{3}\left(3e^{\frac{i\theta}{2}}-e^{\frac{i3\theta}{2}}\right)\left(\Gamma(3-\alpha)\left(-i\theta\right)^{\alpha}+(-i\theta)^{3}\sum\limits_{r=0}^{\infty}\zeta(\alpha-2-r)\dfrac{(i\theta)^{r}}{r!}\right)\\
&=\left(1-\dfrac{\theta^{2}}{8}+O(\theta^{4})\right)\left(2+\dfrac{3\theta^{2}}{4}+O(\theta^{3})\right)\Big(\Gamma(3-\alpha)(-i\theta)^{\alpha}+O(\theta^{3})\Big)\\
&=2\Gamma(3-\alpha)(-i\theta)^{\alpha}+O(\theta^{2+\alpha}).
\end{align*}

On the other hand, as $g_{1}(\theta,\alpha)$ is periodic and continuous on $\mathbb{R}$ with respect to $\theta$, it is bounded, and hence for $\vert\theta\vert\geq\theta_{0}$, $\vert g_{1}(\theta,\alpha)-2\Gamma(3-\alpha)(-i\theta)^{\alpha}\vert/\vert\theta\vert^{2+\alpha}$ is also bounded. In conclusion, we have for all $\theta\in\mathbb{R}$,
\begin{align*}
\Big\vert g_{1}(\theta,\alpha)-2\Gamma(3-\alpha)(-i\theta)^{\alpha}\Big\vert\leq C_{3}\vert\theta\vert^{2+\alpha},
\end{align*}
for some $C_{3}>0$ independent of $\theta$.

From $f\in V^{\gamma}$, by Riemann-Lebesgue lemma, we have $\left\vert\mF[d^{\gamma}f/d x^{\gamma}](\xi)\right\vert=\vert\xi\vert^{\gamma}\vert\hat{f}(\xi)\vert\rightarrow 0~\textrm{as}~\vert\xi\vert\rightarrow\infty$, where $\mF[\cdot]=\hat{\cdot}$ is the Fourier transform, hence $\vert\hat{f}(\xi)\vert\leq C_{\gamma}/(1+\vert\xi\vert)^{\gamma}$ for some $C_{\gamma}>0$ independent of $\xi,x,h$. 

Combining the results, we have for $\xi\in\mathbb{R}$,
\begin{align*}
\bigg\vert\mF\left[\left(\dfrac{d^{\alpha}}{d x^{\alpha}}-A_{\alpha,1}^l\right)f\right](\xi)\bigg\vert
&=\bigg\vert(-i\xi)^{\alpha}-\dfrac{1}{2\Gamma(3-\alpha)h^{\alpha}}g_{1}(\xi h,\alpha)\bigg\vert\vert\hat{f}(\xi)\vert\\
&\leq\dfrac{1}{2\Gamma(3-\alpha)h^{\alpha}}\cdot C_{3}\vert\xi h\vert^{2+\alpha}\cdot\dfrac{C_{\gamma}}{(1+\vert\xi\vert)^{\gamma}}\\
&:=\dfrac{C_{4}\vert\xi\vert^{2+\alpha}h^{2}}{(1+\vert\xi\vert)^{\gamma}}.
\end{align*}

Since $\gamma>3+\alpha$, by Fourier inversion, we have
\begin{align*}
\left\vert\left(\dfrac{d^{\alpha}}{d x^{\alpha}}-A_{\alpha,1}^l\right)f(x)\right\vert
&\leq C_{5}h^{2}\int_{-\infty}^{\infty}\dfrac{\vert\xi\vert^{2+\alpha}}{(1+\vert\xi\vert)^{\gamma}}d\xi\\
&=M_{1}h^{2},
\end{align*}
for some $C_{5},M_{1}>0$ independent of $x,h$.
\end{proof}

In this proof, the second-order convergence of the approximations~(\ref{2ndapp}) is interpreted to be resulting from $2I_{1}+I_{2}=(2+O(\theta^{2}))(\Gamma(3-\alpha)(-i\theta)^{\alpha}+O(\theta^{3})$. As the factor $2+O(\theta^{2})$ varies with different weights of $I_{1}$ and $I_{2}$, while the factor $\Gamma(3-\alpha)(-i\theta)^{\alpha}+O(\theta^{3})$ is fixed, it is likely that $3-\alpha$ is an optimal order of approximation for this approach, and it can be achieved by introducing certain similar weights such that $O(\theta^{2})$ is replaced by $O(\theta^{3})$. At the same time, it is also desired that the weighted result has non-negative real part. Taking $I_{3}$ into account, the approach of upwind approximation is extended in the discussions below.

\subsection{Two-term upwind approximation}\label{sap1}
Similar to the proof of Proposition~\ref{cs}, it can be shown that for $\alpha\in(0,1)$, 
\[g_{2}(\theta,\alpha):=I_{1}+\dfrac{1}{2}I_{2}+\dfrac{1}{4}I_{3}=(1+O(\theta^{3}))(\Gamma(3-\alpha)(-i\theta)^{\alpha}+O(\theta^{3})),\]
so that the approximations
\begin{align}\label{ap1}
&A_{\alpha,2}^l u(x,t):=\dfrac{1}{\Gamma(3-\alpha)}\sum\limits_{k=0}^{\infty}\left(b_{k}+\dfrac{1}{2}c_{k}+\dfrac{1}{4}d_{k}\right)f(x-kh)\approx\dfrac{d^{\alpha}f(x)}{dx^{\alpha}},\nonumber\\
&A_{\alpha,2}^r u(x,t):=\dfrac{1}{\Gamma(3-\alpha)}\sum\limits_{k=0}^{\infty}\left(b_{k}+\dfrac{1}{2}c_{k}+\dfrac{1}{4}d_{k}\right)f(x+kh)\approx\dfrac{d^{\alpha}f(x)}{d(-x)^{\alpha}},
\end{align}
are of order $3-\alpha$ if $f\in V^{\gamma}$ for $\gamma>4+\alpha$.

The corresponding real part is
\[\textrm{Re}(g_{2}(\theta,\alpha))=8\sin^{3}\dfrac{\theta}{2}\Gamma(3-\alpha)\Big(g_{21}(\theta)H_{1}(\theta,\alpha)+g_{22}(\theta)H_{2}(\theta,\alpha)\Big),\]
where
\begin{align*}
&g_{21}(\theta)=\cos\dfrac{\theta}{2}+\sin\dfrac{\theta}{2}\sin\theta-\sin^{2}\dfrac{\theta}{2}\cos\dfrac{3\theta}{2},\\
&g_{22}(\theta)=\sin\dfrac{\theta}{2}-\sin\dfrac{\theta}{2}\cos\theta-\sin^{2}\dfrac{\theta}{2}\sin\dfrac{3\theta}{2}.
\end{align*}

As $g_{22}(\theta)=\sin^{3}(\theta/2)(-1+4\sin^{2}(\theta/2))$, we have $g_{22}(\theta)<0$ when $\theta$ is near $0$, such that $\textrm{Re}(g_{2}(\theta,\alpha))<0$ for small $\theta$ when $\alpha$ is near $1$. Nevertheless, the contours of $\textrm{Re}(g_{2}(\theta,\alpha))$ in Fig.~\ref{3a01} and Fig.~\ref{3a02} illustrate that it is non-negative for a wide range of $\alpha\in(0,0.986]$, where the $H_{1}$ and $H_{2}$ are evaluated with $n$ from $0$ to $1000$.

\begin{figure}[!hbtp]
\centering
\includegraphics[scale=0.32]{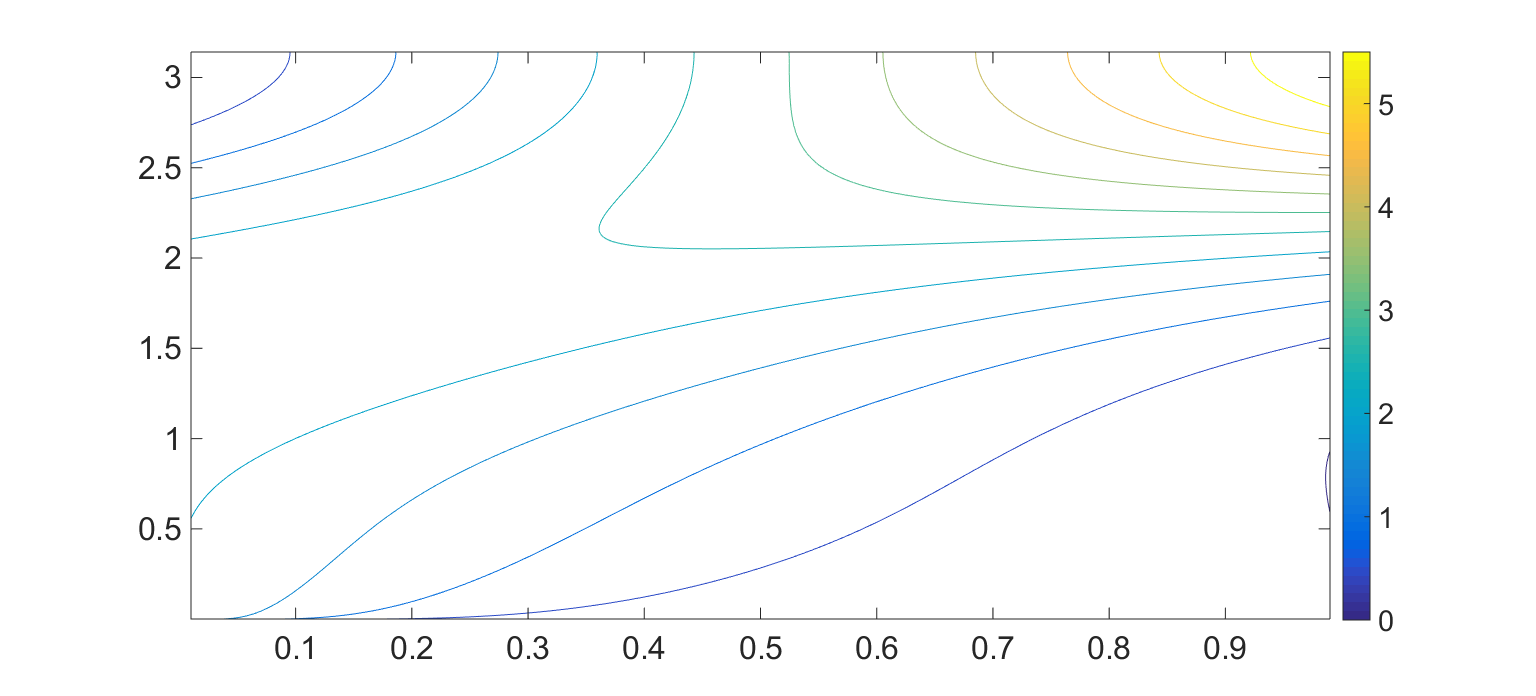}
\caption{\footnotesize{Contour of $\textrm{Re}(g_{2}(\theta,\alpha))$ with $\alpha\in[0.01,0.99],~\theta\in[0.001,3.141]$.}}\label{3a01}
\end{figure}
\begin{figure}[!hbtp]
\centering
\includegraphics[scale=0.32]{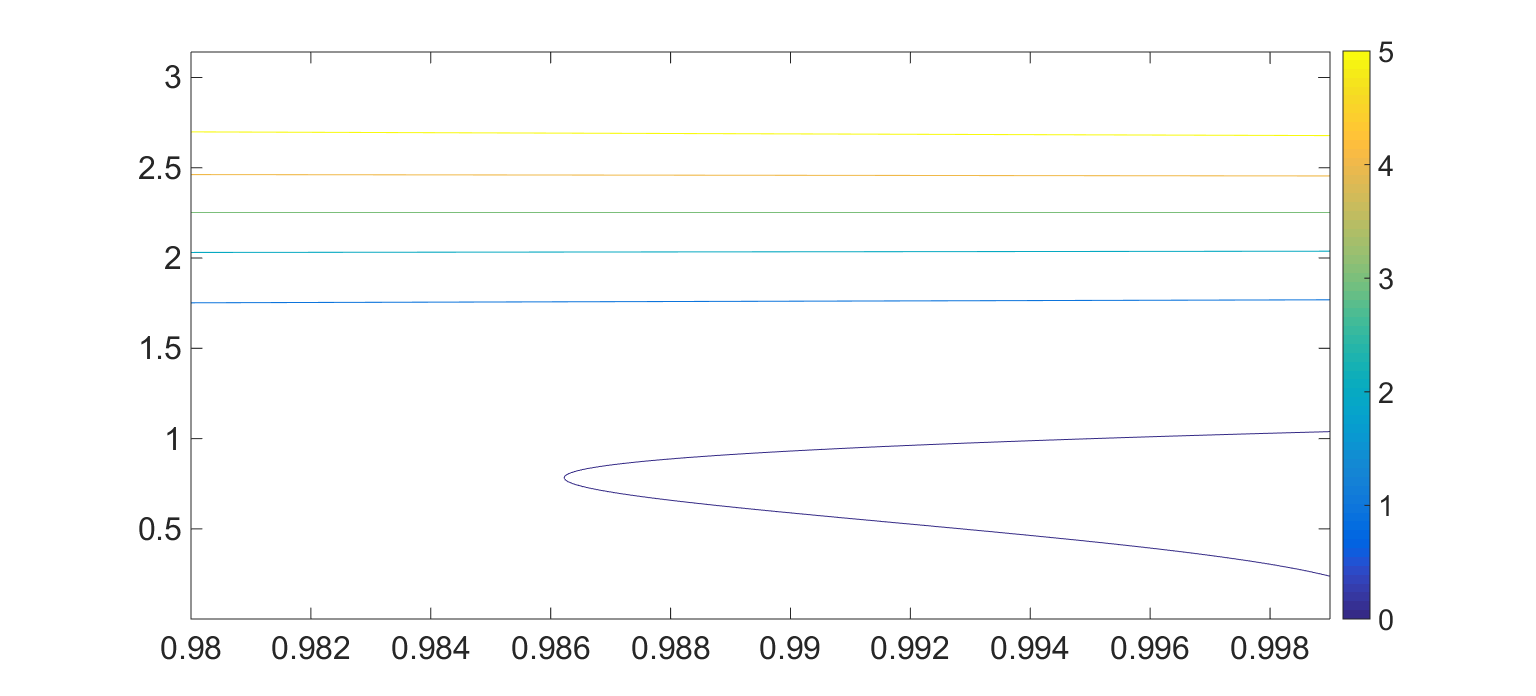}
\caption{\footnotesize{Contour of $\textrm{Re}(g_{2}(\theta,\alpha))$ with $\alpha\in[0.98,0.999],~\theta\in[0.001,3.141]$.}}\label{3a02}
\end{figure}

\subsection{Upwind-and-shift approximation}\label{sap2}
As an attempt to fill the gap where non-negative values emerge, we consider the following undetermined form,
\[g_{3}(\theta,\alpha)=AI_{1}+BI_{2}+CI_{3}+De^{i\theta}I_{1}+Ee^{i\theta}I_{2}+Fe^{i\theta}I_{3}+Pe^{i2\theta}I_{1}+Qe^{i2\theta}I_{2}+Re^{i2\theta}I_{3}.\]

Similarly, we have $g_{3}(\theta,\alpha)=(1+O(\theta^{3}))(\Gamma(3-\alpha)(-i\theta)^{\alpha}+O(\theta^{3}))$ for $\alpha\in(0,1)$ when
\[A=1-D-P,~B=\dfrac{1}{2}+D-E+2P-Q,~C=\dfrac{1}{4}+E-F-P+2Q-R.\]

Noticing that $I_{2}=(1-e^{i\theta})I_{1}$ and $I_{3}=(1-e^{i\theta})I_{2}=(1-e^{i\theta})^{2}I_{1}$, we then have
\[g_{3}(\theta,\alpha)=I_{1}+\dfrac{1}{2}I_{2}+\dfrac{1}{4}I_{3}-(F-Q)(1-e^{i\theta})I_{3}-R(1-e^{i2\theta})I_{3}\]
We note here that the terms involving $D,E,P$ have vanished.

Denote $c=\cos(\theta/2)$ and $s=\sin(\theta/2)$, then we have 
\[\textrm{Re}(g_{3}(\theta,\alpha))=8\sin^{3}\dfrac{\theta}{2}\Gamma(3-\alpha)\Big(g_{31}(\theta)H_{1}(\theta,\alpha)+g_{32}(\theta)H_{2}(\theta,\alpha)\Big),\]
with
\begin{align*}
g_{31}(\theta)=(6c-9c^{3}+4c^{5})+4(F-Q)&(-8c+32c^{3}-40c^{5}+16c^{7})\\
&~+4R(4c-56c^{3}+164c^{5}-176c^{7}+64c^{9}),\\
g_{32}(\theta)=s^{3}(-1+4s^{2})+8s^{3}(F-Q)&(-1+8s^{2}-8s^{4})\\
&~+16s^{3}R(-1+13s^{2}-28s^{4}+16s^{6}).
\end{align*}

Regrettably, negative values still emerge for any set of weights. In fact, if $\textrm{Re}(g_{3}(\theta,\alpha))\geq 0$ for all $(\theta,\alpha)\in[0,\pi]\times(0,1)$, then $g_{32}(\theta)\geq 0$ for all $\theta\in[0,\pi]$, hence
\begin{align*}
&\begin{cases}
\lim\limits_{\theta\to 0^{+}}g_{32}(\theta)=(-1-8(F-Q)-16R)\lim\limits_{s\to 0^{+}}s^{3}\geq 0,\\
g_{32}\left(\dfrac{\pi}{3}\right)=\dfrac{1}{2}(F-Q)+\dfrac{3}{2}R\geq 0,\\
g_{32}\left(\dfrac{\pi}{2}\right)=\dfrac{1}{2\sqrt{2}}(1+8(F-Q)+8R)\geq 0,\\
\end{cases}\\
\Rightarrow
&\begin{cases}
-1-8(F-Q)-16R\geq 0,&(i)\\
8(F-Q)+24R\geq 0,&(ii)\\
1+8(F-Q)+8R\geq 0,&(iii)\\
\end{cases}
\end{align*}
which has no solution as $(i)+(ii)$ gives $8R\geq 1$ and $(i)+(iii)$ gives $8R\leq 0$.

To select a set of weights such that $\textrm{Re}(g_{3}(\theta,\alpha))$ is non-negative for a wider range of $\alpha$, observing that $g_{32}$ is relatively more negative than $g_{31}$, we did a numerical search for $\arg\max_{F-Q,R}\min_{\theta}g_{32}(\theta)$. By taking $D=E=P=Q=0,~F=-0.23,~R=0.111$, such that $A=1,~B=0.5,~C=0.369$, we have $\textrm{Re}(g_{3}(\theta,\alpha))\geq 0$ for $(\theta,\alpha)\in[0,\pi]\times(0,0.997]$, as illustrated in Fig.~\ref{3a1} and Fig.~\ref{3a2}.

\begin{figure}[!tbhp]
\centering
\includegraphics[scale=0.32]{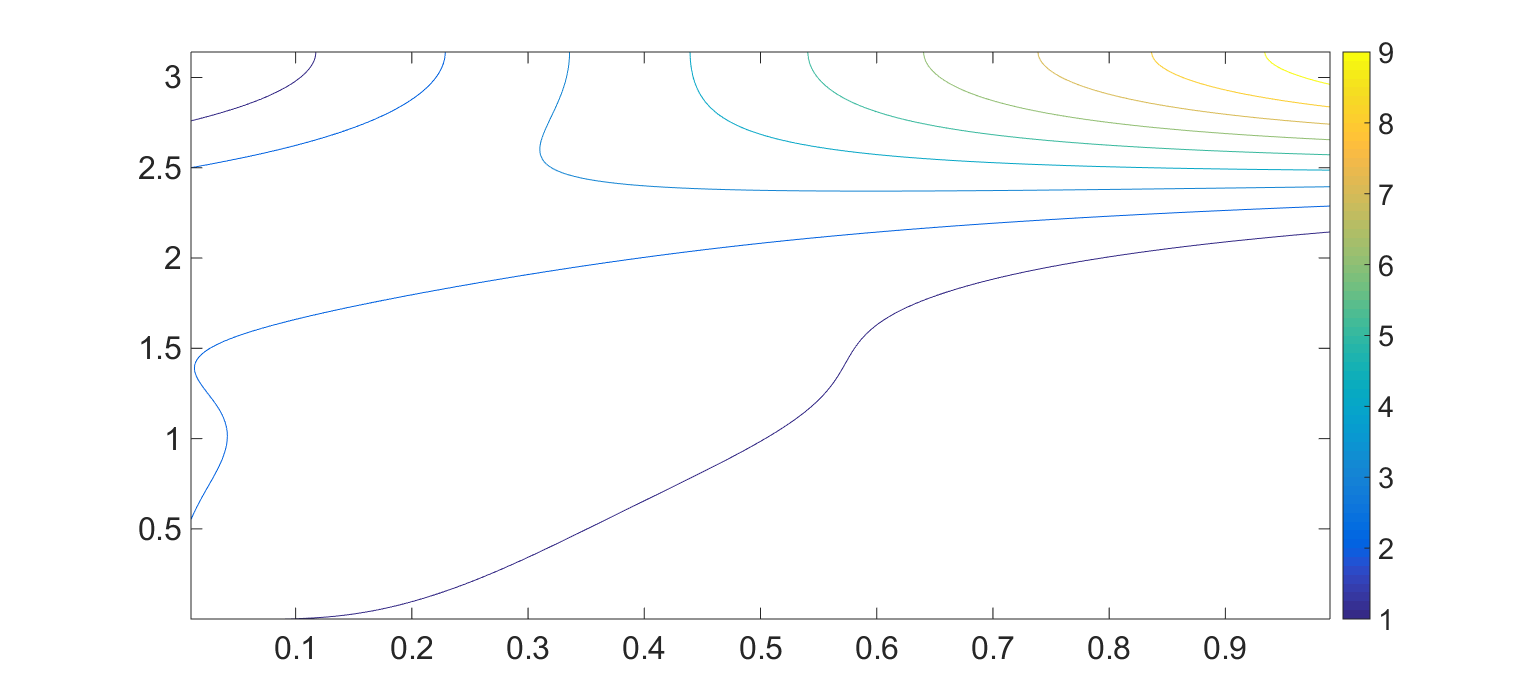}
\caption{\footnotesize{Contour of $\textrm{Re}(g_{3}(\theta,\alpha))$ with $\alpha\in[0.01,0.99],~\theta\in[0.001,3.141]$.}}\label{3a1}
\end{figure}
\begin{figure}[!tbhp]
\centering
\includegraphics[scale=0.32]{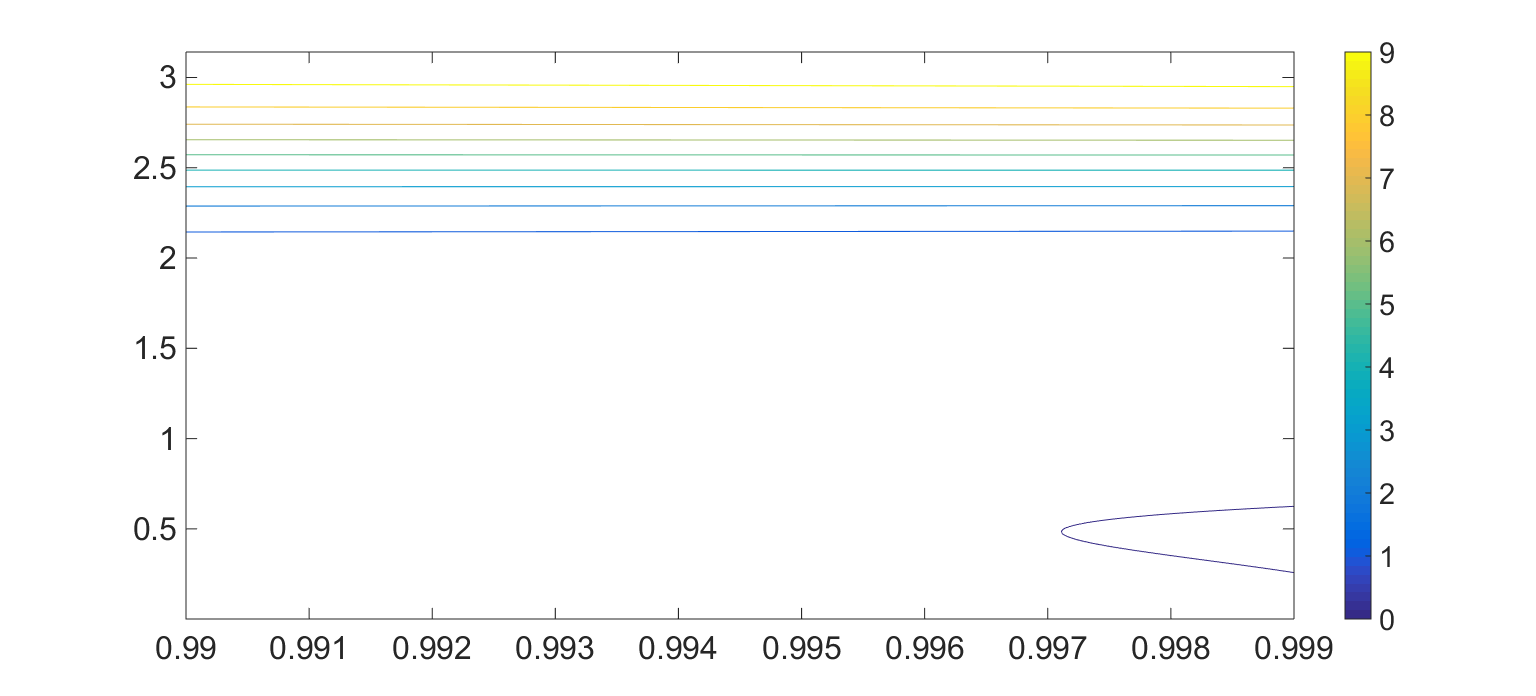}
\caption{\footnotesize{Contour of $\textrm{Re}(g_{3}(\theta,\alpha))$ with $\alpha\in[0.99,0.999],~\theta\in[0.001,3.141]$.}}\label{3a2}
\end{figure}

The corresponding approximations
\begin{align}\label{ap2}
&A_{\alpha,3}^l u(x,t):=\dfrac{1}{\Gamma(3-\alpha)}\left(\sum\limits_{k=0}^{\infty}\left(b_{k}+0.5c_{k}+0.369d_{k}\right)f(x-kh)\right.\nonumber\\
&\left.+\sum\limits_{k=0}^{\infty}-0.23d_{k}f(x-(k+1)h)+\sum\limits_{k=0}^{\infty}0.111d_{k}f(x-(k+2)h)\right)\approx\dfrac{d^{\alpha}f(x)}{dx^{\alpha}},\nonumber\\
&A_{\alpha,3}^r u(x,t):=\dfrac{1}{\Gamma(3-\alpha)}\left(\sum\limits_{k=0}^{\infty}\left(b_{k}+0.5c_{k}+0.369d_{k}\right)f(x+kh)\right.\nonumber\\
&\left.+\sum\limits_{k=0}^{\infty}-0.23d_{k}f(x+(k+1)h)+\sum\limits_{k=0}^{\infty}0.111d_{k}f(x+(k+2)h)\right)\approx\dfrac{d^{\alpha}f(x)}{d(-x)^{\alpha}},
\end{align}
are of order $3-\alpha$ if $f\in V^{\gamma}$ for $\gamma>4+\alpha$.

\begin{remark}
Aside from the considered forms, the possibility that other weighted forms may have non-negative real parts for all $\alpha\in(0,1)$ is not excluded. On the other hand, higher order approximations can be studied by considering higher powers and higher order divided differences in the definition of $a_{k}$, we remark here that the third order and fourth order multi-term upwind approximations without shifts are observed inadequate for being non-negative.
\end{remark}

\section{Finite difference scheme and the discrete linear system}\label{sec3}
In this section, the space fractional convection equation (\ref{fde}) is discretized by implicit finite difference method and structure of the discrete linear system is observed.

Consider uniform grid points $t_{n+1}=t_n+\tau$, $n=0,1,\ldots$, with $\tau>0$, and $x_{i+1}=x_i+h$, $i=0,1,\ldots$, with $h>0$.
Let
$
u^{n}_i=u(x_i,t_n)$ and $q^{n+1/2}_i=q(x_i,t_{n+1/2}), 
$
 for $i,n=0,1,2,\ldots$. Applying the
Crank-Nicolson technique to the fractional convection equation (\ref{fde}) with approximations
(\ref{2ndapp}), (\ref{ap1}), (\ref{ap2}), and denote operators $\delta_{m}^{\alpha,\beta}=(1+\beta) A_{\alpha,m}^l/2+(1-\beta) A_{\alpha,m}^r/2$ for $m=1,2,3$, we have
$$
\frac{u_{i}^{n+1}-u_{i}^{n}}{\tau}=- \frac{D}{2} \delta_{m}^{\alpha,\beta} u_{i}^{n}- \frac{D}{2} \delta_{m}^{\alpha,\beta} u_{i}^{n+1}+q_{i}^{n+1 / 2}+R_{i}^{n+1/2} 
$$

Ignoring the local error term $R_{i}^{n+1/2}$, then for each $m$, we get the following numerical scheme

\begin{equation}\label{scheme}
\left(1+\frac{D}{2 } \tau \delta_{m}^{\alpha,\beta}\right) U_{i}^{n+1}=\left(1-\frac{D}{2} \tau \delta_{m}^{\alpha,\beta}\right) U_{i}^{n}+\tau q_{i}^{n+1 / 2},
\end{equation}
where $U_i^n$ is a numerical approximation to $u_i^n$.

 Assume the nodal points $U_{i}^{n}$ are zero except for $i=1, \ldots, N$. 
 Introducing the vectors $\mathbf{U}^{n}=$ $\left[U_{1}^{n}, \ldots, U_{N}^{n}\right]^{\top}$ and $\mathbf{q}^{n+1/2}=$ $\left[q_{1}^{n+1/2}, \ldots, q_{N}^{n+1/2}\right]^{\top}$, the identity matrix $\mathbf{I}$, and the lower triangular Toeplitz matrices \cite{RchanJin} $\mathbf{B}_{1},~\mathbf{B}_{2},\mathbf{B}_{3},~\mathbf{J}$, with first columns $[b_{0},b_{1},\ldots,b_{N-1}]^{\top}$, $[c_{0},c_{1},\ldots,c_{N-1}]^{\top}$, $[d_{0},d_{1},\ldots,d_{N-1}]^{\top}$, $[0,1,0,0,\ldots,0]^{\top}$ respectively, the scheme~(\ref{scheme}) can be written in matrix form
\begin{equation}\label{system}
\left(\mathbf{I}+ \mu_{\alpha} \mathbf{B}_{m}^{\alpha,\beta}\right) \mathbf{U}^{n+1}=\left(\mathbf{I}- \mu_{\alpha} \mathbf{B}_{m}^{\alpha,\beta}\right) \mathbf{U}^{n}+\tau \mathbf{q}^{n+1/2},
\end{equation}
where $\mu_{\alpha}=D \tau/(2\Gamma(3-\alpha)h^{\alpha})$ and $\mathbf{B}_{m}^{\alpha,\beta}=(1+\beta) \widehat{\mathbf{B}}_{m}/2+(1-\beta) \widehat{\mathbf{B}}_{m}^{\top}/2$, with 
\begin{align*}
&\widehat{\mathbf{B}}_{1}=2(\mathbf{B}_{1}+\mathbf{B}_{2}/2),\\
&\widehat{\mathbf{B}}_{2}=2(\mathbf{B}_{1}+\mathbf{B}_{2}/2+\mathbf{B}_{3}/4),\\
&\widehat{\mathbf{B}}_{3}=2(\mathbf{B}_{1}+0.5\mathbf{B}_{2}+0.369\mathbf{B}_{3}-0.23\mathbf{J}\mathbf{B}_{3}+0.111\mathbf{J}^{2}\mathbf{B}_{3}).
\end{align*}

\section{Stability and convergence analysis}\label{sec4}
In this section, the numerical scheme (\ref{scheme}) with $m=1$ is shown to be unconditionally stable in the sense that the error in each time level is bounded by the error in the initial time level, and the convergence rate is $O(\tau^2+h^2)$. The cases for $m=2,3$ are also discussed. The following lemma is the key of the analysis.
\begin{lemma} \label{spd}
The symmetric matrix $\widehat{\mathbf{B}}_{1}+\widehat{\mathbf{B}}_{1}^{\top}$ is positive semidefinite.
\end{lemma}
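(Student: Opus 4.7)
The plan is to recognise $\widehat{\mathbf{B}}_{1}+\widehat{\mathbf{B}}_{1}^{\top}$ as a symmetric Toeplitz section whose generating symbol is precisely the function that Lemma~\ref{L2} has already shown to be non-negative, and then invoke the standard quadratic-form-via-symbol identity for Toeplitz matrices.

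First I would make the Toeplitz structure explicit. Writing $e_{k}:=2b_{k}+c_{k}$ for $k=0,1,\ldots,N-1$, the matrix $\widehat{\mathbf{B}}_{1}=2\mathbf{B}_{1}+\mathbf{B}_{2}$ is lower triangular Toeplitz with first column $[e_{0},e_{1},\ldots,e_{N-1}]^{\top}$. A direct entrywise computation then gives
\[
(\widehat{\mathbf{B}}_{1}+\widehat{\mathbf{B}}_{1}^{\top})_{ij}=
\begin{cases}
2e_{0}, & i=j,\\
e_{|i-j|}, & i\neq j,
\end{cases}
\]
so $\mathbf{S}:=\widehat{\mathbf{B}}_{1}+\widehat{\mathbf{B}}_{1}^{\top}$ is the $N\times N$ truncation of a symmetric Toeplitz operator whose symbol is
\[
\sigma(\theta)=2e_{0}+2\sum_{k=1}^{\infty}e_{k}\cos(k\theta)=2\sum_{k=0}^{\infty}(2b_{k}+c_{k})\cos(k\theta).
\]
The absolute summability of $\{e_{k}\}$, which follows from the mean-value estimates $b_{k},c_{k}=O(k^{-1-\alpha})$ already used in the proof of Proposition~\ref{ac}(i), guarantees that $\sigma$ is a bona fide continuous function and that its Fourier coefficients are $\hat\sigma_{0}=2e_{0}$, $\hat\sigma_{k}=\hat\sigma_{-k}=e_{|k|}$ for $k\neq 0$. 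Thus $\mathbf{S}_{ij}=\hat\sigma_{i-j}$.

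Next I would reduce positive semidefiniteness to the non-negativity of the symbol. For an arbitrary real $v=(v_{1},\ldots,v_{N})^{\top}$, introduce the trigonometric polynomial $p_{v}(\theta)=\sum_{j=1}^{N}v_{j}e^{ij\theta}$. Expanding $|p_{v}(\theta)|^{2}=\sum_{i,j}v_{i}v_{j}e^{i(i-j)\theta}$ and using the symmetry of the Fourier coefficients of $\sigma$ gives the standard identity
\[
v^{\top}\mathbf{S}\,v=\frac{1}{2\pi}\int_{-\pi}^{\pi}\sigma(\theta)\,|p_{v}(\theta)|^{2}\,d\theta.
\]
By Lemma~\ref{L2}, $\sigma(\theta)\geq 0$ for every $\theta\in\mathbb{R}$, and $|p_{v}(\theta)|^{2}\geq 0$ trivially, so the integrand is non-negative and therefore $v^{\top}\mathbf{S}v\geq 0$.

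There is no significant obstacle here: the entire analytic burden (non-negativity of the generating symbol via the Lerch-transcendent representation) has already been discharged by Lemma~\ref{L2}. The only book-keeping to get right is the Fourier-coefficient indexing of a symmetric, not lower-triangular, Toeplitz matrix, together with a one-line justification that $\sum_{k}|e_{k}|<\infty$ so that the symbol-coefficient identification is valid. The same argument will apply verbatim to $\widehat{\mathbf{B}}_{2}+\widehat{\mathbf{B}}_{2}^{\top}$ and $\widehat{\mathbf{B}}_{3}+\widehat{\mathbf{B}}_{3}^{\top}$ in the regions of $\alpha$ where $\mathrm{Re}\,g_{2}$ and $\mathrm{Re}\,g_{3}$ are non-negative, which is presumably how the analogous cases $m=2,3$ mentioned in the section opener will be treated.
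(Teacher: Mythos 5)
Your proposal is correct and follows essentially the same route as the paper: identify $\widehat{\mathbf{B}}_{1}+\widehat{\mathbf{B}}_{1}^{\top}$ as a symmetric Toeplitz matrix with generating function $2\sum_{k=0}^{\infty}(2b_{k}+c_{k})\cos(k\theta)$ and conclude positive semidefiniteness from the non-negativity established in Lemma~\ref{L2}. The only difference is that the paper cites a standard reference for the ``non-negative symbol implies PSD'' step, whereas you prove it explicitly via the quadratic-form integral identity, which makes your version more self-contained but not substantively different.
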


\begin{proof}
From \cite{Jinbook}, the symmetric Toeplitz matrix $\widehat{\mathbf{B}}_{1}+\widehat{\mathbf{B}}_{1}^{\top}$ is positive semidefinite if its generating function is non-negative. Since the generating function is given as 
\begin{equation*}
\mathring{g}_{1}(\theta,\alpha):=g_{1}(\theta,\alpha)+g_{1}(-\theta,\alpha)=2\sum_{k=0}^{\infty}(2b_{k}+c_{k})\cos(k\theta),
\end{equation*}
by Lemma \ref{L2}, we have $\mathring{g}_{1}(\theta,\alpha)\geq 0$, hence $\widehat{\mathbf{B}}_{1}+\widehat{\mathbf{B}}_{1}^{\top}$ is positive semidefinite.
\end{proof}

Now, let $\mathbf{A}_{1}=\left(\mathbf{I}+\mu_{\alpha}\mathbf{B}_{1}^{\alpha,\beta}\right)^{-1}\left(\mathbf{I}-\mu_{\alpha}\mathbf{B}_{1}^{\alpha,\beta}\right)$. Then the matrix form (\ref{system}) can be rewritten as the form $\mathbf{U}^{n} =\mathbf{A}_{1} \mathbf{U}^{n-1}+\tau(\mathbf{I}+\mu_{\alpha}\mathbf{B}_{1}^{\alpha,\beta})^{-1}\mathbf{q}^{n-1/2}$, and we have the following lemma.

\begin{lemma}\label{L4}
The matrices $\mathbf{A}_{1}$ and $\left(\mathbf{I}+\mu_{\alpha}\mathbf{B}_{1}^{\alpha,\beta}\right)^{-1}$ satisfy the norm inequalities $\|\mathbf{A}_{1}\|_2\leq 1$ and $\left\|\left(\mathbf{I}+\mu_{\alpha}\mathbf{B}_{1}^{\alpha,\beta}\right)^{-1}\right\|_{2}\leq 1$ uniformly.
\end{lemma}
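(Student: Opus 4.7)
The plan is to recast the bounds as coercivity-plus-Cayley-transform statements about the matrix $\mathbf{M}:=\mu_{\alpha}\mathbf{B}_{1}^{\alpha,\beta}$. The essential input supplied by Lemma~\ref{spd} is the positive semidefiniteness of $\widehat{\mathbf{B}}_{1}+\widehat{\mathbf{B}}_{1}^{\top}$. First I would observe that the symmetric part of $\mathbf{B}_{1}^{\alpha,\beta}$ satisfies
\[
\mathbf{B}_{1}^{\alpha,\beta}+(\mathbf{B}_{1}^{\alpha,\beta})^{\top}=\tfrac{1+\beta}{2}(\widehat{\mathbf{B}}_{1}+\widehat{\mathbf{B}}_{1}^{\top})+\tfrac{1-\beta}{2}(\widehat{\mathbf{B}}_{1}^{\top}+\widehat{\mathbf{B}}_{1})=\widehat{\mathbf{B}}_{1}+\widehat{\mathbf{B}}_{1}^{\top},
\]
since the $(1\pm\beta)/2$ weights sum to $1$. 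Hence for every real $\mathbf{v}$ we have $\mathbf{v}^{\top}\mathbf{M}\mathbf{v}=\tfrac{\mu_{\alpha}}{2}\mathbf{v}^{\top}(\widehat{\mathbf{B}}_{1}+\widehat{\mathbf{B}}_{1}^{\top})\mathbf{v}\geq 0$, and this is the only structural fact I will use from here on.

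Next I would handle the resolvent bound. Expanding directly gives
\[
\|(\mathbf{I}+\mathbf{M})\mathbf{v}\|_{2}^{2}=\|\mathbf{v}\|_{2}^{2}+2\mathbf{v}^{\top}\mathbf{M}\mathbf{v}+\|\mathbf{M}\mathbf{v}\|_{2}^{2}\geq \|\mathbf{v}\|_{2}^{2},
\]
which simultaneously establishes invertibility of $\mathbf{I}+\mathbf{M}$ (the only vector annihilated would satisfy $\|\mathbf{v}\|_{2}=0$) and the bound $\|(\mathbf{I}+\mathbf{M})^{-1}\|_{2}\leq 1$ after the substitution $\mathbf{w}=(\mathbf{I}+\mathbf{M})^{-1}\mathbf{v}$.

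For the first inequality I would use the Cayley-type identity: since $\mathbf{I}\pm\mathbf{M}$ are polynomials in $\mathbf{M}$ they commute, so $\mathbf{A}_{1}=(\mathbf{I}-\mathbf{M})(\mathbf{I}+\mathbf{M})^{-1}$. Setting $\mathbf{w}=(\mathbf{I}+\mathbf{M})^{-1}\mathbf{v}$ and subtracting the two expansions yields
\[
\|(\mathbf{I}+\mathbf{M})\mathbf{w}\|_{2}^{2}-\|(\mathbf{I}-\mathbf{M})\mathbf{w}\|_{2}^{2}=4\mathbf{w}^{\top}\mathbf{M}\mathbf{w}\geq 0,
\]
so $\|\mathbf{A}_{1}\mathbf{v}\|_{2}=\|(\mathbf{I}-\mathbf{M})\mathbf{w}\|_{2}\leq\|(\mathbf{I}+\mathbf{M})\mathbf{w}\|_{2}=\|\mathbf{v}\|_{2}$. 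Since the spectral norm of a real matrix is attained on real vectors (via $\lambda_{\max}(\mathbf{A}_{1}^{\top}\mathbf{A}_{1})$), this argument suffices.

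I do not anticipate a serious obstacle: once the symmetric-part computation is made, both inequalities drop out of the identity $\|(\mathbf{I}\pm\mathbf{M})\mathbf{v}\|_{2}^{2}=\|\mathbf{v}\|_{2}^{2}\pm 2\mathbf{v}^{\top}\mathbf{M}\mathbf{v}+\|\mathbf{M}\mathbf{v}\|_{2}^{2}$. The only point requiring care is uniformity in the parameters, but this is automatic: $\mu_{\alpha}>0$ for every admissible $\alpha,\tau,h$, and Lemma~\ref{spd} holds at every matrix size $N$ and every $\alpha\in(0,1)$, so the constant $1$ on the right-hand side is genuinely parameter-free.
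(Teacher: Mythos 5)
Your proof is correct, and it rests on exactly the same two ingredients as the paper's: the observation that the symmetric part of $\mu_{\alpha}\mathbf{B}_{1}^{\alpha,\beta}$ equals $\tfrac{\mu_{\alpha}}{2}(\widehat{\mathbf{B}}_{1}+\widehat{\mathbf{B}}_{1}^{\top})$ (so Lemma~\ref{spd} gives $\mathbf{v}^{\top}\mathbf{M}\mathbf{v}\geq 0$), and the identity expressing $\|(\mathbf{I}+\mathbf{M})\mathbf{w}\|_{2}^{2}-\|(\mathbf{I}-\mathbf{M})\mathbf{w}\|_{2}^{2}$ as a multiple of that quadratic form. Where you diverge is in the mechanics of converting this into a bound on $\|\mathbf{A}_{1}\|_{2}$: the paper forms the product $\mathbf{F}=\bigl(\mathbf{I}+\mathbf{M}^{\top}\bigr)^{-1}(\mathbf{I}+\mathbf{M})^{-1}(\mathbf{I}-\mathbf{M})\bigl(\mathbf{I}-\mathbf{M}^{\top}\bigr)$, bounds its eigenvalues in $[0,1]$ via the generalized Rayleigh quotient $\lambda=\mathbf{v}^{\top}\mathbf{H}\mathbf{v}/\mathbf{v}^{\top}\mathbf{G}\mathbf{v}$ with $\mathbf{G}-\mathbf{H}\succeq 0$, and then invokes the fact that $\mathbf{ST}$ and $\mathbf{TS}$ share eigenvalues to identify $\lambda_{\max}(\mathbf{F})$ with $\lambda_{\max}(\mathbf{A}_{1}^{\top}\mathbf{A}_{1})$. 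You instead commute the Cayley factors, $\mathbf{A}_{1}=(\mathbf{I}-\mathbf{M})(\mathbf{I}+\mathbf{M})^{-1}$, and run a direct per-vector norm estimate; this is slightly more elementary, avoids reasoning about the spectrum of the non-symmetric matrix $\mathbf{F}$ (a point where the paper's ``$0\leq\lambda\leq 1$'' requires a small justification it does not spell out), and yields the resolvent bound $\|(\mathbf{I}+\mathbf{M})^{-1}\|_{2}\leq 1$ together with invertibility as an immediate by-product rather than ``similarly.'' Both proofs are uniform in $N$, $\tau$, $h$, $\beta$ for the same reason you state. The one cosmetic point worth flagging is that the paper's displayed identity writes $2\mu_{\alpha}\mathbf{v}^{\top}(\mathbf{B}_{1}+\mathbf{B}_{1}^{\top})\mathbf{v}$ where it should read $\widehat{\mathbf{B}}_{1}$; your symmetric-part computation is the correct version of that step.
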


\begin{proof}
Let $(\mathbf{v},\lambda)$ be an eigenpair of the matrix 
\begin{equation*}
\mathbf{F}=\left(\mathbf{I}+(\mu_{\alpha}\mathbf{B}_{1}^{\alpha,\beta})^{\top}\right)^{-1}(\mathbf{I}+\mu_{\alpha}\mathbf{B}_{1}^{\alpha,\beta})^{-1}(\mathbf{I}-\mu_{\alpha}\mathbf{B}_{1}^{\alpha,\beta})\left(\mathbf{I}-(\mu_{\alpha}\mathbf{B}_{1}^{\alpha,\beta})^{\top}\right),
\end{equation*}
then we have $0\leq\lambda\leq 1$ from Lemma \ref{spd} and the identity
\begin{align*}
\mathbf{v}^{\top}(\mathbf{I}+\mu_{\alpha}\mathbf{B}_{1}^{\alpha,\beta})\left(\mathbf{I}+(\mu_{\alpha}\mathbf{B}_{1}^{\alpha,\beta})^{\top}\right) \mathbf{v}-\mathbf{v}^{\top}(\mathbf{I}-\mu_{\alpha}\mathbf{B}_{1}^{\alpha,\beta})&\left(\mathbf{I}-(\mu_{\alpha}\mathbf{B}_{1}^{\alpha,\beta})^{\top}\right)\mathbf{v}\\
&=2\mu_{\alpha}\mathbf{v}^{\top}(\mathbf{B}_{1}+\mathbf{B}_{1}^{\top})\mathbf{v}.
\end{align*}

As $\mathbf{S T}$ and $\mathbf{T S}$ have the same set of eigenvalues for all square matrices $\mathbf{S}, \mathbf{T}$, we have

\begin{align*}
&\lambda_{\max }(\mathbf{F})\\
&=\lambda_{\max }\left(\left(\mathbf{I}-(\mu_{\alpha}\mathbf{B}_{1}^{\alpha,\beta})^{\top}\right)\left(\mathbf{I}+(\mu_{\alpha}\mathbf{B}_{1}^{\alpha,\beta})^{\top}\right)^{-1}(\mathbf{I}+\mu_{\alpha}\mathbf{B}_{1}^{\alpha,\beta})^{-1}(\mathbf{I}-\mu_{\alpha}\mathbf{B}_{1}^{\alpha,\beta})\right)\\
&=\lambda_{\max }\left(\mathbf{A}_{1}^{\top} \mathbf{A}_{1}\right).
\end{align*}

Summing up, we have $\|\mathbf{A}_{1}\|_{2}=\sqrt{\lambda_{\max }\left(\mathbf{A}_{1}^{\top} \mathbf{A}_{1}\right)} \leq 1$. The case for the matrix $\left(\mathbf{I}+\mu_{\alpha}\mathbf{B}_{1}^{\alpha,\beta}\right)^{-1}$ can be obtained similarly.
\end{proof}

Now, we can obtain the following convergence result of the numerical scheme (\ref{scheme}) with $m=1$.

\begin{theorem}\label{T1}
Suppose the solution $u(x,t)$ of the fractional convection equation (\ref{fde}) satisfies the following properties.
\begin{itemize}
\item
$\partial^{2}u(x,t)/\partial t^{2}$ is bounded on $\mathbb{R}\times(0,\infty)$,
\item
for some $\gamma>3+\alpha$ and $\gamma'>1$, 
\begin{equation*}
\dfrac{\partial^{s'}}{\partial t^{s'}}\dfrac{\partial^{s}u(x,t)}{\partial(\pm x)^{s}}\in C(\mathbb{R}\times(0,\infty))\cap L^{1}(\mathbb{R}\times(0,\infty)),~\forall s\in[0,\gamma],~s'\in[0,\gamma']
\end{equation*}
and vanish at infinity $\forall s\in[0,\gamma-1],~s'\in[0,\gamma'-1]$.
\end{itemize}
Then the finite difference scheme (\ref{scheme}) with $m=1$ is convergent, and the order of accuracy is $O(\tau^2+h^2)$ in discrete 2-norm.
\end{theorem}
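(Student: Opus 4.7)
The plan is to use the consistency-plus-stability argument. Let $e_i^n = u(x_i,t_n) - U_i^n$ and $\mathbf{e}^n = [e_1^n,\ldots,e_N^n]^\top$. Subtracting the scheme~(\ref{system}) from the corresponding exact relation yields
\begin{equation*}
(\mathbf{I}+\mu_\alpha \mathbf{B}_1^{\alpha,\beta})\mathbf{e}^{n+1} = (\mathbf{I}-\mu_\alpha \mathbf{B}_1^{\alpha,\beta})\mathbf{e}^n + \tau\mathbf{R}^{n+1/2},
\end{equation*}
equivalently $\mathbf{e}^{n+1} = \mathbf{A}_1\mathbf{e}^n + \tau(\mathbf{I}+\mu_\alpha \mathbf{B}_1^{\alpha,\beta})^{-1}\mathbf{R}^{n+1/2}$. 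Taking the discrete 2-norm and applying Lemma~\ref{L4} to both operator factors gives $\|\mathbf{e}^{n+1}\|_2 \leq \|\mathbf{e}^n\|_2 + \tau\|\mathbf{R}^{n+1/2}\|_2$; iterating from $\mathbf{e}^0 = \mathbf{0}$ yields $\|\mathbf{e}^n\|_2 \leq n\tau\,\max_{0\leq k\leq n-1}\|\mathbf{R}^{k+1/2}\|_2$, so the theorem reduces to a uniform $O(\tau^2 + h^2)$ bound on $\mathbf{R}^{k+1/2}$ up to the terminal time $T=n\tau$.

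I then bound each $R_i^{n+1/2}$ pointwise. Splitting it into its time-discretization and space-discretization parts, the Crank-Nicolson rule contributes $O(\tau^2)$ via a Taylor expansion about $t_{n+1/2}$ and the bounded $\partial^2 u/\partial t^2$ hypothesis. For the spatial part, the regularity assumption places $u(\cdot,t) \in V^\gamma$ with $\gamma > 3+\alpha$ uniformly in $t$, so Proposition~\ref{cs} applies at both $t_n$ and $t_{n+1}$ to give an $O(h^2)$ defect in $A_{\alpha,1}^l u$ and $A_{\alpha,1}^r u$; combining with the coefficient $(1\pm\beta)/2$ and averaging the two time levels produces $|R_i^{n+1/2}| \leq C(\tau^2 + h^2)$ with $C$ independent of $i,n,h,\tau$.

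The final step is to pass from this pointwise control to a discrete 2-norm bound, and this is where the real obstacle lies. Reading $\|\cdot\|_2$ as the mesh-weighted norm $\|\mathbf{v}\|_2^2 = h\sum_i|v_i|^2$ (under which Lemma~\ref{L4} is unchanged since $\sqrt{h}$ is a scalar), the $L^1$ decay of $u$ and its derivatives required by $V^\gamma$ makes $h\sum_i |R_i^{n+1/2}|^2$ uniformly bounded in $N$ once the window $[x_1,x_N]$ exceeds the effective support, giving $\|\mathbf{R}^{n+1/2}\|_2 = O(\tau^2 + h^2)$ and hence the claim. The subtlety is that the exact operator couples $u(x_i,t)$ to all $s < x_i$, whereas the discrete convolution is truncated to indices $1,\ldots,N$; the $V^\gamma$ decay must be invoked to absorb this domain-truncation residual into $R_i^{n+1/2}$ without inflating the constant. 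The cases $m=2,3$ would follow the same blueprint, but only on those $\alpha$-ranges for which the generating-function positivity underpinning Lemma~\ref{spd} persists, namely the ranges identified in Sections~\ref{sap1}--\ref{sap2}; outside them, the stability step of Lemma~\ref{L4} is no longer guaranteed and the argument has to be conditional.
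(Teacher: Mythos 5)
Your proposal is correct and follows essentially the same route as the paper: the error recurrence $\mathbf{e}^{n+1}=\mathbf{A}_{1}\mathbf{e}^{n}+\tau(\mathbf{I}+\mu_{\alpha}\mathbf{B}_{1}^{\alpha,\beta})^{-1}\mathbf{R}^{n+1/2}$, stability via Lemma~\ref{L4}, consistency of order $O(\tau^{2}+h^{2})$ from the Taylor expansion in time and Proposition~\ref{cs} in space, and a telescoping sum in the mesh-weighted discrete 2-norm $h^{1/2}\|\cdot\|_{2}$. Your added remarks on passing from the pointwise bound on $R_{i}^{n+1/2}$ to the scaled 2-norm and on the domain-truncation residual make explicit details the paper leaves implicit, but do not change the argument.
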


\begin{proof}
By Taylor's expansion for the temporal derivative, and by similar arguments described in the proof of Proposition~\ref{cs} for the spatial derivatives, it can be shown that the ignored error term in scheme~(\ref{scheme}) is $R_{i}^{n+1/2}=O(\tau^{2}+h^{2})$, so that for $\mathbf{R}^{n+1/2}=[R_{1}^{n+1/2},\ldots,R_{N}^{n+1/2}]^{\top}$, we have $h^{1/2}\left\|\mathbf{R}^{n+1/2}\right\|_{2}\leq C_{6}(\tau^{2}+h^{2})$ for some $C_{6}>0$ independent of $n,\tau,h$.

Let $\mathbf{U}^{n}$ be the solution vector of the equation~(\ref{fde}) and $\tilde{\mathbf{U}}^{n}$ be the solution vector of the scheme~(\ref{scheme}), we have 
\begin{align*}
&\mathbf{U}^{n+1} =\mathbf{A}_{1} \mathbf{U}^{n}+\tau(\mathbf{I}+\mu_{\alpha}\mathbf{B}_{1}^{\alpha,\beta})^{-1}\mathbf{q}^{n+1/2}+\tau(\mathbf{I}+\mu_{\alpha}\mathbf{B}_{1}^{\alpha,\beta})^{-1}\mathbf{R}^{n+1/2},~\textrm{and}\\
&\tilde{\mathbf{U}}^{n+1} =\mathbf{A}_{1}\tilde{\mathbf{U}}^{n}+\tau(\mathbf{I}+\mu_{\alpha}\mathbf{B}_{1}^{\alpha,\beta})^{-1}\mathbf{q}^{n+1/2}.
\end{align*}

Also, denote $E_n=h^{1 / 2}\left\|\mathbf{U}^{n}-\tilde{\mathbf{U}}^{n}\right\|_{2}$, by Lemma \ref{L4}, we get that
\begin{align*}
E_{n+1}&\leq h^{1/2}\left\|\mathbf{A}_{1}\left(\mathbf{U}^{n}-\tilde{\mathbf{U}}^{n}\right)\right\|_{2}+\tau h^{1/2}\left\|(\mathbf{I}+\mu_{\alpha}\mathbf{B}_{1}^{\alpha,\beta})^{-1}\mathbf{R}^{n+1/2}\right\|_{2}\\
&\leq h^{1 / 2}\left\|\mathbf{U}^{n}-\tilde{\mathbf{U}}^{n}\right\|_{2}+\tau h^{1/2}\left\|\mathbf{R}^{n+1/2}\right\|_{2}\\
&\leq E_{n}+C_{6}\tau(\tau^{2}+h^{2})\\
&\leq\cdots\\
&\leq E_{0}+C_{6}(n+1)\tau(\tau^{2}+h^{2})\\
&\leq C_{6}T(\tau^{2}+h^{2}),~\forall n,
\end{align*}
where $T$ is the length of the time interval.
\end{proof}

For the numerical scheme~(\ref{scheme}) with $m=2,3$, notice that 
\begin{equation*}
\mathring{g}_{m}(\theta,\alpha):=2(g_{m}(\theta,\alpha)+g_{m}(-\theta,\alpha))=4\textrm{Re}(g_{m}(\theta,\alpha))
\end{equation*}
is the generating function of the symmetric Toeplitz matrix $\widehat{\mathbf{B}}_{m}+\widehat{\mathbf{B}}_{m}^{\top}$, then relying on the numerical observations in Section \ref{sap1} and Section \ref{sap2}, Lemma~\ref{spd}, Lemma~\ref{L4}, and Theorem~\ref{T1} can be similarly obtained as follows.

\begin{theorem}
For $m=2$ with $\alpha\in(0,0.986]$, or $m=3$ with $\alpha\in(0,0.997]$, we have the following results.
\begin{itemize}
\item
The matrix $\widehat{\mathbf{B}}_{m}+\widehat{\mathbf{B}}_{m}^{\top}$ is symmetric positive semidefinite.
\item
For $\mathbf{A}_{m}=\left(\mathbf{I}+\mu_{\alpha}\mathbf{B}_{m}^{\alpha,\beta}\right)^{-1}\left(\mathbf{I}-\mu_{\alpha}\mathbf{B}_{m}^{\alpha,\beta}\right)$, we have
\[\|\mathbf{A}_{m}\|_2\leq 1\textrm{ and }\left\|\left(\mathbf{I}+\mu_{\alpha}\mathbf{B}_{m}^{\alpha,\beta}\right)^{-1}\right\|_{2}\leq 1.\]
\item
Suppose the solution $u(x,t)$ of the fractional convection equation (\ref{fde}) satisfies the following properties.
\begin{itemize}
\item
$\partial^{2}u(x,t)/\partial t^{2}$ is bounded on $\mathbb{R}\times(0,\infty)$,
\item
for some $\gamma>4+\alpha$ and $\gamma'>1$, 
\begin{equation*}
\dfrac{\partial^{s'}}{\partial t^{s'}}\dfrac{\partial^{s}u(x,t)}{\partial(\pm x)^{s}}\in C(\mathbb{R}\times(0,\infty))\cap L^{1}(\mathbb{R}\times(0,\infty)),~\forall s\in[0,\gamma],~s'\in[0,\gamma']
\end{equation*}
and vanish at infinity $\forall s\in[0,\gamma-1],~s'\in[0,\gamma'-1]$.
\end{itemize}
Then the finite difference scheme (\ref{scheme}) is convergent, and the order of accuracy is $O(\tau^2+h^{3-\alpha})$ in discrete 2-norm.
\end{itemize}
\end{theorem}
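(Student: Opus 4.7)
The plan is to replay, almost verbatim, the chain Lemma~\ref{spd} $\Rightarrow$ Lemma~\ref{L4} $\Rightarrow$ Theorem~\ref{T1}, with the analytic non-negativity of $\mathring{g}_{1}$ (Lemma~\ref{L2}) replaced by the numerical non-negativity of $\mathring{g}_{m}$ established in Section~\ref{sap1} (for $m=2$, $\alpha\in(0,0.986]$) and Section~\ref{sap2} (for $m=3$, $\alpha\in(0,0.997]$), and the $O(h^{2})$ spatial consistency of Proposition~\ref{cs} upgraded to $O(h^{3-\alpha})$.

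First, for the semidefiniteness of $\widehat{\mathbf{B}}_{m}+\widehat{\mathbf{B}}_{m}^{\top}$: this is a symmetric Toeplitz matrix whose generating function is $\mathring{g}_{m}(\theta,\alpha)=4\,\textrm{Re}(g_{m}(\theta,\alpha))$. Since $\textrm{Re}(g_{m}(\theta,\alpha))\geq 0$ on the stated $\alpha$-range by the contour plots in Figs.~\ref{3a01}--\ref{3a02} and~\ref{3a1}--\ref{3a2}, the same criterion from \cite{Jinbook} used in Lemma~\ref{spd} delivers the claim. Next, for the two norm bounds, the essential algebraic fact is
\[\mathbf{B}_{m}^{\alpha,\beta}+(\mathbf{B}_{m}^{\alpha,\beta})^{\top}=\widehat{\mathbf{B}}_{m}+\widehat{\mathbf{B}}_{m}^{\top},\]
which holds because $(1+\beta)/2+(1-\beta)/2=1$. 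Combined with the identity
\[\mathbf{v}^{\top}(\mathbf{I}+\mu_{\alpha}\mathbf{B}_{m}^{\alpha,\beta})\bigl(\mathbf{I}+(\mu_{\alpha}\mathbf{B}_{m}^{\alpha,\beta})^{\top}\bigr)\mathbf{v}-\mathbf{v}^{\top}(\mathbf{I}-\mu_{\alpha}\mathbf{B}_{m}^{\alpha,\beta})\bigl(\mathbf{I}-(\mu_{\alpha}\mathbf{B}_{m}^{\alpha,\beta})^{\top}\bigr)\mathbf{v}=2\mu_{\alpha}\mathbf{v}^{\top}(\widehat{\mathbf{B}}_{m}+\widehat{\mathbf{B}}_{m}^{\top})\mathbf{v}\geq 0,\]
and with the $\mathbf{ST}$ versus $\mathbf{TS}$ eigenvalue trick, the argument of Lemma~\ref{L4} transfers mutatis mutandis to give $\|\mathbf{A}_{m}\|_{2}\leq 1$; the resolvent bound $\|(\mathbf{I}+\mu_{\alpha}\mathbf{B}_{m}^{\alpha,\beta})^{-1}\|_{2}\leq 1$ follows by replacing the numerator $\mathbf{I}-\mu_{\alpha}\mathbf{B}_{m}^{\alpha,\beta}$ by $\mathbf{I}$ throughout.

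Finally, for the convergence statement I would reproduce the proof of Theorem~\ref{T1} with the spatial consistency order strengthened to $h^{3-\alpha}$. The latter is obtained from a Proposition~\ref{cs}-type Fourier argument in which $g_{m}(\theta,\alpha)-\Gamma(3-\alpha)(-i\theta)^{\alpha}=O(\theta^{3})$ replaces the previous $O(\theta^{2+\alpha})$ remainder; this expansion was already recorded in Sections~\ref{sap1} and~\ref{sap2}, and the hypothesis $\gamma>4+\alpha$ is exactly what is needed to make the resulting integral $\int |\xi|^{3}(1+|\xi|)^{-\gamma}d\xi$ converge. Combined with the $O(\tau^{2})$ Crank--Nicolson temporal error, this yields $R_{i}^{n+1/2}=O(\tau^{2}+h^{3-\alpha})$ and hence $h^{1/2}\|\mathbf{R}^{n+1/2}\|_{2}\leq C(\tau^{2}+h^{3-\alpha})$. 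The telescoping estimate $E_{n+1}\leq E_{n}+C\tau(\tau^{2}+h^{3-\alpha})$ from Theorem~\ref{T1} then iterates to $E_{n}\leq CT(\tau^{2}+h^{3-\alpha})$.

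The principal obstacle is conceptual rather than computational: unlike Lemma~\ref{L2}, the non-negativity of $\mathring{g}_{m}$ for $m=2,3$ is only \emph{observed} on a finite mesh of $(\theta,\alpha)$ with the $H_{1},H_{2}$ series truncated at $n=1000$, so strictly speaking the first bullet (and therefore the other two) rests on this numerical evidence. A fully rigorous version would require an analytic analogue of the Lerch-transcendent argument used for $m=1$; within the scope of the present result one simply quotes the numerical evidence and restricts the $\alpha$-range accordingly, after which the remaining steps are mechanical reproductions of the proofs for $m=1$.
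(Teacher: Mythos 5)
Your proposal is correct and follows essentially the same route as the paper, which itself offers no detailed proof but merely states that Lemma~\ref{spd}, Lemma~\ref{L4}, and Theorem~\ref{T1} carry over by replacing the analytic non-negativity of $\mathring{g}_{1}$ with the numerically observed non-negativity of $\mathring{g}_{m}$ and the $O(h^{2})$ consistency with $O(h^{3-\alpha})$. Your explicit identification of the key algebraic fact $\mathbf{B}_{m}^{\alpha,\beta}+(\mathbf{B}_{m}^{\alpha,\beta})^{\top}=\widehat{\mathbf{B}}_{m}+\widehat{\mathbf{B}}_{m}^{\top}$ and your honest caveat about the result resting on truncated numerical evidence are both consistent with (indeed slightly more careful than) the paper's treatment.
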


\section{Numerical results}\label{sec5}
In this section, numerical experiments are performed to show the effectiveness of the high order finite difference schemes for the fractional convection equations with order $0<\alpha<1$. 

In the computations, we take $M$ temporal grid points and $N$ spatial grid points, with $N=M$ for scheme~(\ref{scheme}) where approximation~(\ref{2ndapp}) is adopted, and $N=\lfloor M^{2/(3-\alpha)}\rfloor<M$ for scheme~(\ref{scheme}) where approximation~(\ref{ap1}) or (\ref{ap2}) is adopted, so that second order convergence is expected.

The results obtained from the linear systems being solved by the direct method and the GMRES method will be provided. For the direct method, the inverse of the time-independent matrix is computed before time-marching, so that only matrix-vector multiplication is required for the remaining part. For the GMRES method, we set restart $=10$, maxit $=N-1$, with $tol$ to be specified. Zero vector is chosen to be the initial guess and the stopping criterion is the 2-norm of the residual less than that of the right-hand side vector times a given $tol$. 

In the following, two numerical examples are tested. The first example has exact solution so that error can be computed directly and order of accuracy can be confirmed. The second example is to approximate a distribution function related to L\'{e}vy flights. The accuracy is confirmed by a known probability density function for particular value of $\alpha$ and $\beta$.

In the tables, each error is computed between the numerical and exact solutions at the final time level, and the discrete 2-norm of the errors $E_{h^{2}}$ are reported for different numbers of temporal grid points $M$. ``Rate" denotes the convergence rates of the numerical solutions, ``Time" denotes the CPU time in seconds, and ``Iter" denotes the average numbers of iterations for the convergence of the GMRES method over all time levels. 

In addition, all experiments are carried out by MATLAB R2014b with configuration Intel(R) Core(TM) i5-8300H CPU 2.30 GHz and 8GB RAM.

\begin{example}\label{ex1}
Consider the fractional convection equation (\ref{fde}) defined on $(x,t)\in[0,2]\times[0,1]$, with $D=1$, exact solution $u(x, t)=\exp(-t) x^{4}(2-x)^{4}$, initial condition $u_{0}(x)=x^{4}(2-x)^{4}$, corresponding source term $q(x, t)$, and boundary condition $u(x, t)=0$ for $x$ outside $(0,2)$. We also take $tol=\tau^{3}$ to maintain second order convergence. The results are presented in Table~\ref{tab1} to Table~\ref{tab4}.
\end{example}

Table~\ref{tab1} to Table~\ref{tab4} illustrate that for a wide range of $\alpha$, the schemes~(\ref{scheme}) with approximations~(\ref{ap1}) and~(\ref{ap2}) achieve second order convergence with fewer grid points, hence less time, than the scheme~(\ref{scheme}) with approximation~(\ref{2ndapp}). The second order convergence is achieved with $h=O(\tau^{2/(3-\alpha)})$, which agrees with the derivation that the spatial convergence order is of $3-\alpha$. Besides, there are noticeable drops of convergence order for approximation~(\ref{ap1}) when $\alpha=0.99$, this can be attributed to the negativity of the generating function $g_{2}$ for $\alpha\in(0,986,1)$ as discussed in section~\ref{sec2}, hence the instability of the scheme; while there is no observable drop of convergence order for approximation~(\ref{ap2}). 

On the other hand, the accuracy of the direct method and GMRES method are almost the same, except for the case with approximation~(\ref{ap1}) when $\alpha=0.99$, which probably follows from the numerical instability as mentioned above. The CPU times of the GMRES method, or the increasing rates, are smaller than that of the direct method, especially when $M$ is large. Moreover, the numbers of iterations of the GMRES method are very small for all different $\alpha$, $\beta$, and $M$. Nevertheless, when $\alpha$ is near $1$, the CPU times and the numbers of iterations of the GMRES method with approximations~(\ref{ap1}) and~(\ref{ap2}) are larger than that with approximation~(\ref{2ndapp}).

\begin{table}[H]
\captionsetup{font=footnotesize}
\caption{Results for Example~\ref{ex1} with approximation~(\ref{2ndapp}) and $\alpha=0.1,0.9$.}\label{tab1}
\centering
\fontsize{8}{8}\selectfont
\begin{tabular}{@{\extracolsep{5pt}}cccccccc}
\hline  & & \multicolumn{2}{c}{Direct method} & \multicolumn{4}{c}{GMRES method}\\     \cline{3-4} \cline{5-8}
$(\alpha,\beta)$ & $M$ & $E_{h^{2}}$ & Time & $E_{h^{2}}$ & Rate & Time & Iter\\
\hline
\multirow{5}{*}{$(0.1,0.2)$}														
&	$2^{9}$	&	9.40E-06	&	0.05 	&	9.37E-06	&	-	&	0.67 	&	2.00 	\\
&	$2^{10}$	&	2.35E-06	&	0.97 	&	2.34E-06	&	2.00 	&	0.67 	&	2.00 	\\
&	$2^{11}$	&	5.88E-07	&	8.60 	&	5.88E-07	&	1.99 	&	2.16 	&	3.00 	\\
&	$2^{12}$	&	1.47E-07	&	66.85 	&	1.47E-07	&	2.00 	&	11.45 	&	3.00 	\\
&	$2^{13}$	&	3.68E-08	&	545.28 	&	3.68E-08	&	2.00 	&	48.92 	&	3.00 	\\ \hline
\multirow{5}{*}{$(0.1,0.8)$}														
&	$2^{9}$	&	9.45E-06	&	0.06 	&	9.45E-06	&	-	&	0.32 	&	3.00 	\\
&	$2^{10}$	&	2.36E-06	&	1.05 	&	2.36E-06	&	2.00 	&	0.78 	&	3.00 	\\
&	$2^{11}$	&	5.92E-07	&	8.47 	&	5.92E-07	&	2.00 	&	2.19 	&	3.00 	\\
&	$2^{12}$	&	1.48E-07	&	67.22 	&	1.48E-07	&	2.00 	&	10.93 	&	3.00 	\\
&	$2^{13}$	&	3.70E-08	&	544.94 	&	3.70E-08	&	2.00 	&	47.64 	&	3.00 	\\ \hline
\multirow{5}{*}{$(0.9,0.2)$}														
&	$2^{9}$	&	1.15E-05	&	0.05 	&	1.15E-05	&	-	&	0.47 	&	5.00 	\\
&	$2^{10}$	&	2.80E-06	&	1.11 	&	2.81E-06	&	2.03 	&	1.10 	&	5.00 	\\
&	$2^{11}$	&	6.91E-07	&	8.36 	&	6.92E-07	&	2.02 	&	3.24 	&	5.00 	\\
&	$2^{12}$	&	1.71E-07	&	67.28 	&	1.71E-07	&	2.02 	&	18.87 	&	6.00 	\\
&	$2^{13}$	&	4.24E-08	&	541.02 	&	4.25E-08	&	2.01 	&	82.03 	&	6.00 	\\ \hline
\multirow{5}{*}{$(0.9,0.8)$}														
&	$2^{9}$	&	2.36E-05	&	0.05 	&	2.36E-05	&	-	&	0.45 	&	5.00 	\\
&	$2^{10}$	&	5.86E-06	&	0.95 	&	5.86E-06	&	2.01 	&	1.10 	&	5.00 	\\
&	$2^{11}$	&	1.45E-06	&	8.40 	&	1.45E-06	&	2.01 	&	3.52 	&	6.00 	\\
&	$2^{12}$	&	3.60E-07	&	67.00 	&	3.60E-07	&	2.01 	&	18.91 	&	6.00 	\\
&	$2^{13}$	&	8.94E-08	&	540.75 	&	8.94E-08	&	2.01 	&	82.00 	&	6.00 	\\ \hline
\end{tabular}
\end{table}

\begin{table}[H]
\captionsetup{font=footnotesize}
\caption{Results for Example~\ref{ex1} with approximation~(\ref{2ndapp}) and $\alpha=0.91,0.99,0.991,0.999$.}\label{tab2}
\centering
\fontsize{8}{8}\selectfont
\begin{tabular}{@{\extracolsep{5pt}}cccccccc}
\hline  & & \multicolumn{2}{c}{Direct method} & \multicolumn{4}{c}{GMRES method}\\     \cline{3-4} \cline{5-8}
$(\alpha,\beta)$ & $M$ & $E_{h^{2}}$ & Time & $E_{h^{2}}$ & Rate & Time & Iter\\
\hline
\multirow{5}{*}{$(0.95,0.2)$}														
&	$2^{9}$	&	1.32E-05	&	0.05 	&	1.32E-05	&	-	&	0.47 	&	5.00 	\\
&	$2^{10}$	&	3.24E-06	&	1.06 	&	3.24E-06	&	2.02 	&	1.11 	&	5.00 	\\
&	$2^{11}$	&	8.02E-07	&	8.51 	&	8.02E-07	&	2.02 	&	3.51 	&	6.00 	\\
&	$2^{12}$	&	1.99E-07	&	67.53 	&	1.99E-07	&	2.01 	&	19.23 	&	6.00 	\\
&	$2^{13}$	&	4.94E-08	&	541.88 	&	4.94E-08	&	2.01 	&	94.87 	&	7.00 	\\ \hline
\multirow{5}{*}{$(0.95,0.8)$}														
&	$2^{9}$	&	2.80E-05	&	0.05 	&	2.80E-05	&	-	&	0.48 	&	5.00 	\\
&	$2^{10}$	&	6.97E-06	&	1.11 	&	6.97E-06	&	2.01 	&	1.28 	&	6.00 	\\
&	$2^{11}$	&	1.73E-06	&	8.46 	&	1.73E-06	&	2.01 	&	3.52 	&	6.00 	\\
&	$2^{12}$	&	4.31E-07	&	67.14 	&	4.31E-07	&	2.01 	&	21.68 	&	7.00 	\\
&	$2^{13}$	&	1.07E-07	&	542.22 	&	1.07E-07	&	2.01 	&	95.89 	&	7.00 	\\ \hline
\multirow{5}{*}{$(0.99,0.2)$}														
&	$2^{9}$	&	1.62E-05	&	0.05 	&	1.62E-05	&	-	&	0.46 	&	5.00 	\\
&	$2^{10}$	&	4.05E-06	&	1.03 	&	4.05E-06	&	2.00 	&	1.10 	&	5.00 	\\
&	$2^{11}$	&	1.01E-06	&	8.45 	&	1.01E-06	&	2.00 	&	3.53 	&	6.00 	\\
&	$2^{12}$	&	2.53E-07	&	67.37 	&	2.53E-07	&	2.00 	&	19.10 	&	6.00 	\\
&	$2^{13}$	&	6.31E-08	&	541.65 	&	6.31E-08	&	2.00 	&	94.49 	&	7.00 	\\ \hline
\multirow{5}{*}{$(0.99,0.8)$}														
&	$2^{9}$	&	3.31E-05	&	0.06 	&	3.31E-05	&	-	&	0.45 	&	5.00 	\\
&	$2^{10}$	&	8.28E-06	&	0.99 	&	8.28E-06	&	2.00 	&	1.10 	&	5.00 	\\
&	$2^{11}$	&	2.07E-06	&	8.38 	&	2.07E-06	&	2.00 	&	3.50 	&	6.00 	\\
&	$2^{12}$	&	5.17E-07	&	67.69 	&	5.17E-07	&	2.00 	&	22.23 	&	7.00 	\\
&	$2^{13}$	&	1.29E-07	&	544.44 	&	1.29E-07	&	2.00 	&	95.90 	&	7.00 	\\ \hline
\multirow{5}{*}{$(0.995,0.2)$}														
&	$2^{9}$	&	1.68E-05	&	0.05 	&	1.68E-05	&	-	&	0.41 	&	4.00 	\\
&	$2^{10}$	&	4.20E-06	&	1.02 	&	4.20E-06	&	2.00 	&	1.08 	&	5.00 	\\
&	$2^{11}$	&	1.05E-06	&	8.35 	&	1.05E-06	&	2.00 	&	3.05 	&	5.00 	\\
&	$2^{12}$	&	2.62E-07	&	67.88 	&	2.62E-07	&	2.00 	&	18.85 	&	6.00 	\\
&	$2^{13}$	&	6.56E-08	&	539.40 	&	6.56E-08	&	2.00 	&	93.51 	&	7.00 	\\ \hline
\multirow{5}{*}{$(0.995,0.8)$}														
&	$2^{9}$	&	3.39E-05	&	0.06 	&	3.39E-05	&	-	&	0.39 	&	4.00 	\\
&	$2^{10}$	&	8.48E-06	&	1.18 	&	8.49E-06	&	2.00 	&	1.08 	&	5.00 	\\
&	$2^{11}$	&	2.12E-06	&	8.49 	&	2.12E-06	&	2.00 	&	3.52 	&	6.00 	\\
&	$2^{12}$	&	5.31E-07	&	67.64 	&	5.31E-07	&	2.00 	&	19.06 	&	6.00 	\\
&	$2^{13}$	&	1.33E-07	&	543.17 	&	1.33E-07	&	2.00 	&	93.53 	&	7.00 	\\ \hline
\multirow{5}{*}{$(0.999,0.2)$}														
&	$2^{9}$	&	1.73E-05	&	0.06 	&	1.73E-05	&	-	&	0.41 	&	4.00 	\\
&	$2^{10}$	&	4.33E-06	&	1.04 	&	4.33E-06	&	2.00 	&	0.97 	&	4.00 	\\
&	$2^{11}$	&	1.08E-06	&	8.44 	&	1.08E-06	&	2.00 	&	3.15 	&	5.00 	\\
&	$2^{12}$	&	2.71E-07	&	67.50 	&	2.71E-07	&	2.00 	&	16.43 	&	5.00 	\\
&	$2^{13}$	&	6.78E-08	&	542.77 	&	6.78E-08	&	2.00 	&	80.85 	&	6.00 	\\ \hline
\multirow{5}{*}{$(0.999,0.8)$}														
&	$2^{9}$	&	3.45E-05	&	0.05 	&	3.45E-05	&	-	&	0.40 	&	4.00 	\\
&	$2^{10}$	&	8.65E-06	&	1.05 	&	8.66E-06	&	2.00 	&	0.95 	&	4.00 	\\
&	$2^{11}$	&	2.17E-06	&	8.44 	&	2.17E-06	&	2.00 	&	3.05 	&	5.00 	\\
&	$2^{12}$	&	5.42E-07	&	67.38 	&	5.42E-07	&	2.00 	&	16.14 	&	5.00 	\\
&	$2^{13}$	&	1.36E-07	&	545.85 	&	1.36E-07	&	2.00 	&	80.60 	&	6.00 	\\ \hline
\end{tabular}
\end{table}

\begin{table}[H]
\captionsetup{font=footnotesize}
\caption{Results for Example~\ref{ex1} with approximation~(\ref{ap1}) and $\alpha=0.1,0.9,0.91,0.99$.}\label{tab3}
\centering
\fontsize{8}{8}\selectfont
\begin{tabular}{@{\extracolsep{5pt}}cccccccc}
\hline  & & \multicolumn{2}{c}{Direct method} & \multicolumn{4}{c}{GMRES method}\\     \cline{3-4} \cline{5-8}
$(\alpha,\beta)$ & $M$ & $E_{h^{2}}$ & Time & $E_{h^{2}}$ & Rate & Time & Iter\\
\hline
\multirow{5}{*}{$(0.1,0.2)$}														
&	$2^{9}$	&	6.50E-06	&	0.01 	&	6.75E-06	&	-	&	0.23 	&	2.00 	\\
&	$2^{10}$	&	1.43E-06	&	0.01 	&	1.58E-06	&	2.09 	&	0.45 	&	2.00 	\\
&	$2^{11}$	&	3.54E-07	&	0.04 	&	3.54E-07	&	2.16 	&	1.49 	&	3.00 	\\
&	$2^{12}$	&	8.94E-08	&	0.16 	&	8.94E-08	&	1.99 	&	2.87 	&	3.00 	\\
&	$2^{13}$	&	2.21E-08	&	0.55 	&	2.21E-08	&	2.02 	&	6.76 	&	3.00 	\\ \hline
\multirow{5}{*}{$(0.1,0.8)$}														
&	$2^{9}$	&	2.34E-05	&	0.00 	&	2.34E-05	&	-	&	0.28 	&	3.00 	\\
&	$2^{10}$	&	5.49E-06	&	0.02 	&	5.49E-06	&	2.09 	&	0.55 	&	3.00 	\\
&	$2^{11}$	&	1.32E-06	&	0.05 	&	1.32E-06	&	2.05 	&	1.10 	&	3.00 	\\
&	$2^{12}$	&	3.20E-07	&	0.16 	&	3.20E-07	&	2.05 	&	2.85 	&	3.00 	\\
&	$2^{13}$	&	7.65E-08	&	0.55 	&	7.65E-08	&	2.07 	&	6.86 	&	3.00 	\\ \hline
\multirow{5}{*}{$(0.9,0.2)$}														
&	$2^{9}$	&	2.76E-06	&	0.03 	&	2.88E-06	&	-	&	0.46 	&	5.00 	\\
&	$2^{10}$	&	6.57E-07	&	0.26 	&	7.09E-07	&	2.02 	&	1.09 	&	5.00 	\\
&	$2^{11}$	&	1.62E-07	&	4.10 	&	1.84E-07	&	1.95 	&	3.82 	&	5.00 	\\
&	$2^{12}$	&	4.02E-08	&	31.19 	&	4.10E-08	&	2.16 	&	25.31 	&	6.00 	\\
&	$2^{13}$	&	1.00E-08	&	221.79 	&	1.03E-08	&	1.99 	&	96.29 	&	6.00 	\\ \hline
\multirow{5}{*}{$(0.9,0.8)$}														
&	$2^{9}$	&	6.53E-06	&	0.04 	&	6.54E-06	&	-	&	0.46 	&	5.00 	\\
&	$2^{10}$	&	1.62E-06	&	0.26 	&	1.62E-06	&	2.01 	&	1.13 	&	5.00 	\\
&	$2^{11}$	&	4.05E-07	&	4.23 	&	4.05E-07	&	2.00 	&	4.36 	&	6.00 	\\
&	$2^{12}$	&	1.01E-07	&	30.55 	&	1.01E-07	&	2.00 	&	25.29 	&	6.00 	\\
&	$2^{13}$	&	2.53E-08	&	221.98 	&	2.53E-08	&	2.00 	&	99.24 	&	6.00 	\\ \hline
\multirow{5}{*}{$(0.95,0.2)$}														
&	$2^{9}$	&	3.40E-06	&	0.04 	&	3.70E-06	&	-	&	0.61 	&	5.00 	\\
&	$2^{10}$	&	8.30E-07	&	0.58 	&	8.50E-07	&	2.12 	&	1.24 	&	6.00 	\\
&	$2^{11}$	&	2.06E-07	&	5.96 	&	2.17E-07	&	1.97 	&	3.64 	&	6.00 	\\
&	$2^{12}$	&	5.14E-08	&	45.21 	&	5.21E-08	&	2.06 	&	36.66 	&	7.00 	\\
&	$2^{13}$	&	1.28E-08	&	339.73 	&	1.32E-08	&	1.98 	&	114.27 	&	7.00 	\\ \hline
\multirow{5}{*}{$(0.95,0.8)$}														
&	$2^{9}$	&	7.48E-06	&	0.05 	&	7.48E-06	&	-	&	0.68 	&	6.00 	\\
&	$2^{10}$	&	1.86E-06	&	0.59 	&	1.86E-06	&	2.00 	&	1.23 	&	6.00 	\\
&	$2^{11}$	&	4.65E-07	&	5.89 	&	4.65E-07	&	2.00 	&	4.14 	&	7.00 	\\
&	$2^{12}$	&	1.16E-07	&	45.33 	&	1.16E-07	&	2.00 	&	36.49 	&	7.00 	\\
&	$2^{13}$	&	2.91E-08	&	340.69 	&	2.91E-08	&	2.00 	&	128.99 	&	8.00 	\\ \hline
\multirow{5}{*}{$(0.99,0.2)$}														
&	$2^{9}$	&	4.19E-06	&	0.06 	&	4.26E-06	&	-	&	0.49 	&	5.00 	\\
&	$2^{10}$	&	1.03E-06	&	0.88 	&	1.04E-06	&	2.03 	&	1.64 	&	6.00 	\\
&	$2^{11}$	&	2.58E-07	&	8.07 	&	2.62E-07	&	1.99 	&	5.64 	&	6.00 	\\
&	$2^{12}$	&	6.44E-08	&	62.02 	&	6.61E-08	&	1.99 	&	34.45 	&	7.00 	\\
&	$2^{13}$	&	2.49E-07	&	487.01 	&	8.38E-06	&	-6.99	&	174.55 	&	8.00 	\\ \hline
\multirow{5}{*}{$(0.99,0.8)$}														
&	$2^{9}$	&	8.44E-06	&	0.05 	&	8.45E-06	&	-	&	0.49 	&	5.00 	\\
&	$2^{10}$	&	2.11E-06	&	0.99 	&	2.11E-06	&	2.00 	&	1.64 	&	6.00 	\\
&	$2^{11}$	&	5.27E-07	&	8.55 	&	5.28E-07	&	2.00 	&	6.34 	&	7.00 	\\
&	$2^{12}$	&	1.32E-07	&	62.08 	&	1.32E-07	&	2.00 	&	38.61 	&	8.00 	\\
&	$2^{13}$	&	1.03E-07	&	489.97 	&	5.33E-06	&	-5.33	&	206.55 	&	9.45 	\\ \hline
\end{tabular}
\end{table}

\begin{table}[H]
\captionsetup{font=footnotesize}
\caption{Results for Example~\ref{ex1} with approximation~(\ref{ap2}) and $\alpha=0.1,0.9,0.991,0.999$.}\label{tab4}
\centering
\fontsize{8}{8}\selectfont
\begin{tabular}{@{\extracolsep{5pt}}cccccccc}
\hline  & & \multicolumn{2}{c}{Direct method} & \multicolumn{4}{c}{GMRES method}\\     \cline{3-4} \cline{5-8}
$(\alpha,\beta)$ & $M$ & $E_{h^{2}}$ & Time & $E_{h^{2}}$ & Rate & Time & Iter\\
\hline
\multirow{5}{*}{$(0.1,0.2)$}														
&	$2^{9}$	&	7.73E-06	&	0.01 	&	7.89E-06	&	-	&	0.23 	&	2.00 	\\
&	$2^{10}$	&	1.42E-06	&	0.01 	&	1.55E-06	&	2.34 	&	0.45 	&	2.00 	\\
&	$2^{11}$	&	3.22E-07	&	0.05 	&	3.22E-07	&	2.27 	&	1.12 	&	3.00 	\\
&	$2^{12}$	&	8.00E-08	&	0.15 	&	8.00E-08	&	2.01 	&	2.80 	&	3.00 	\\
&	$2^{13}$	&	2.00E-08	&	0.58 	&	2.00E-08	&	2.00 	&	6.79 	&	3.00 	\\ \hline
\multirow{5}{*}{$(0.1,0.8)$}														
&	$2^{9}$	&	2.17E-05	&	0.01 	&	2.17E-05	&	-	&	0.28 	&	3.00 	\\
&	$2^{10}$	&	5.13E-06	&	0.01 	&	5.13E-06	&	2.08 	&	0.56 	&	3.00 	\\
&	$2^{11}$	&	1.24E-06	&	0.03 	&	1.24E-06	&	2.05 	&	1.11 	&	3.00 	\\
&	$2^{12}$	&	3.00E-07	&	0.14 	&	3.00E-07	&	2.05 	&	2.89 	&	3.00 	\\
&	$2^{13}$	&	7.16E-08	&	0.55 	&	7.16E-08	&	2.06 	&	6.81 	&	3.00 	\\ \hline
\multirow{5}{*}{$(0.9,0.2)$}														
&	$2^{9}$	&	2.74E-06	&	0.04 	&	2.82E-06	&	-	&	0.56 	&	6.00 	\\
&	$2^{10}$	&	6.56E-07	&	0.35 	&	6.88E-07	&	2.03 	&	1.25 	&	6.00 	\\
&	$2^{11}$	&	1.62E-07	&	4.10 	&	1.75E-07	&	1.98 	&	4.34 	&	6.00 	\\
&	$2^{12}$	&	4.02E-08	&	30.62 	&	4.09E-08	&	2.09 	&	28.94 	&	7.00 	\\
&	$2^{13}$	&	1.00E-08	&	222.54 	&	1.03E-08	&	1.99 	&	109.99 	&	7.00 	\\ \hline
\multirow{5}{*}{$(0.9,0.8)$}														
&	$2^{9}$	&	6.54E-06	&	0.03 	&	6.54E-06	&	-	&	0.53 	&	6.00 	\\
&	$2^{10}$	&	1.62E-06	&	0.27 	&	1.62E-06	&	2.01 	&	1.25 	&	6.00 	\\
&	$2^{11}$	&	4.05E-07	&	4.18 	&	4.05E-07	&	2.00 	&	4.89 	&	7.00 	\\
&	$2^{12}$	&	1.01E-07	&	30.57 	&	1.01E-07	&	2.00 	&	29.53 	&	7.00 	\\
&	$2^{13}$	&	2.52E-08	&	222.57 	&	2.53E-08	&	2.00 	&	123.35 	&	8.00 	\\ \hline
\multirow{5}{*}{$(0.995,0.2)$}														
&	$2^{9}$	&	4.30E-06	&	0.04 	&	4.32E-06	&	-	&	0.53 	&	6.00 	\\
&	$2^{10}$	&	1.07E-06	&	0.93 	&	1.07E-06	&	2.02 	&	1.86 	&	7.00 	\\
&	$2^{11}$	&	2.66E-07	&	8.58 	&	2.67E-07	&	2.00 	&	6.38 	&	7.00 	\\
&	$2^{12}$	&	6.64E-08	&	65.18 	&	6.66E-08	&	2.00 	&	38.34 	&	8.00 	\\
&	$2^{13}$	&	1.66E-08	&	510.03 	&	1.66E-08	&	2.00 	&	242.80 	&	9.00 	\\ \hline
\multirow{5}{*}{$(0.995,0.8)$}														
&	$2^{9}$	&	8.58E-06	&	0.05 	&	8.59E-06	&	-	&	0.55 	&	6.00 	\\
&	$2^{10}$	&	2.15E-06	&	0.84 	&	2.15E-06	&	2.00 	&	2.12 	&	8.00 	\\
&	$2^{11}$	&	5.36E-07	&	8.50 	&	5.36E-07	&	2.00 	&	7.96 	&	9.00 	\\
&	$2^{12}$	&	1.34E-07	&	65.07 	&	1.34E-07	&	2.00 	&	53.24 	&	11.00 	\\
&	$2^{13}$	&	3.35E-08	&	511.97 	&	3.35E-08	&	2.00 	&	324.88 	&	12.00 	\\ \hline
\multirow{5}{*}{$(0.999,0.2)$}														
&	$2^{9}$	&	4.41E-06	&	0.05 	&	4.43E-06	&	-	&	0.51 	&	5.00 	\\
&	$2^{10}$	&	1.09E-06	&	1.03 	&	1.09E-06	&	2.02 	&	1.41 	&	6.00 	\\
&	$2^{11}$	&	2.72E-07	&	8.79 	&	2.73E-07	&	2.00 	&	4.13 	&	6.00 	\\
&	$2^{12}$	&	6.80E-08	&	68.23 	&	6.81E-08	&	2.00 	&	36.87 	&	7.00 	\\
&	$2^{13}$	&	1.70E-08	&	540.58 	&	1.70E-08	&	2.00 	&	175.80 	&	8.00 	\\ \hline
\multirow{5}{*}{$(0.999,0.8)$}														
&	$2^{9}$	&	8.71E-06	&	0.05 	&	8.73E-06	&	-	&	0.50 	&	5.00 	\\
&	$2^{10}$	&	2.17E-06	&	1.01 	&	2.18E-06	&	2.00 	&	1.40 	&	6.00 	\\
&	$2^{11}$	&	5.44E-07	&	8.80 	&	5.44E-07	&	2.00 	&	4.64 	&	7.00 	\\
&	$2^{12}$	&	1.36E-07	&	68.54 	&	1.36E-07	&	2.00 	&	45.87 	&	9.00 	\\
&	$2^{13}$	&	3.40E-08	&	537.67 	&	3.40E-08	&	2.00 	&	227.41 	&	10.00 	\\ \hline
\end{tabular}
\end{table}

\begin{example}\label{ex2}
Consider the fractional convection equation (\ref{fde}) 
with $\alpha=0.5$, $\beta=1$, $D=1$, initial condition $u_{0}(x)=\exp(-x^2/\epsilon^2)/(\epsilon\sqrt{\pi})$ which tends to the Dirac delta function as $\epsilon\to 0$, source term $q(x, t)=0$, and exact solution the L\'{e}vy-Smirnov distribution $S(x)=\exp(-1/2x)/\sqrt{2\pi x^3}$ for $x>0$ and $S(x)=0$ for $x\leq 0$. 

In our computations, we take $\epsilon=2h$ in the initial condition $u_0(x)$ as an approximation to the Dirac delta function. Due to the long tail effect of distribution function, we need to take a large spatial interval such that the numerical solution at any time $t$ is close to zero on the boundary. Therefore we consider the computational domain $(x,t)\in[-1,20]\times[0,\sqrt{2}]$. On the other hand, we take $tol=10\tau^{3}$ to maintain second order convergence. The results are presented in Table~\ref{tab5}.
\end{example}

\begin{table}[H]
\captionsetup{font=footnotesize}
\caption{Results for Example~\ref{ex2} with the three mentioned approximations.}\label{tab5}
\centering
\fontsize{8}{8}\selectfont
\begin{tabular}{@{\extracolsep{5pt}}cccccccc}
\hline  & & \multicolumn{2}{c}{Direct method} & \multicolumn{4}{c}{GMRES method}\\     \cline{3-4} \cline{5-8}
 & $M$ & $E_{h^{2}}$ & Time & $E_{h^{2}}$ & Rate & Time & Iter\\
\hline
\multirow{5}{*}{Approx.~(\ref{2ndapp})}													
& $2^{11}$	&	3.59E-03	&	8.30 	&	3.59E-03	&	1.63 	&	2.35 	&	3.15 	\\
& $2^{12}$	&	1.01E-03	&	66.53 	&	1.01E-03	&	1.83 	&	11.99 	&	3.28 	\\
& $2^{13}$	&	2.61E-04	&	530.07 	&	2.61E-04	&	1.95 	&	40.53 	&	3.34 	\\
& $2^{14}$	&	-	&	-	&	6.55E-05	&	1.99 	&	152.84 	&	3.36 	\\
& $2^{15}$	&	-	&	-	&	1.64E-05	&	2.00 	&	566.98 	&	3.37 	\\ \hline
\multirow{5}{*}{Approx.~(\ref{ap1})}													
& $2^{11}$	&	2.36E-02	&	0.16 	&	2.36E-02	&	1.05 	&	1.64 	&	3.00 	\\
& $2^{12}$	&	9.84E-03	&	1.34 	&	9.84E-03	&	1.26 	&	4.23 	&	3.00 	\\
& $2^{13}$	&	3.54E-03	&	14.63 	&	3.54E-03	&	1.48 	&	12.04 	&	3.09 	\\
& $2^{14}$	&	1.16E-03	&	87.55 	&	1.16E-03	&	1.61 	&	21.32 	&	3.22 	\\
& $2^{15}$	&	3.78E-04	&	514.26 	&	3.78E-04	&	1.62 	&	95.20 	&	3.31 	\\ \hline
\multirow{5}{*}{Approx.~(\ref{ap2})}													
& $2^{11}$	&	2.31E-02	&	0.13 	&	2.31E-02	&	1.04 	&	1.68 	&	3.00 	\\
& $2^{12}$	&	9.82E-03	&	1.30 	&	9.82E-03	&	1.23 	&	4.14 	&	3.00 	\\
& $2^{13}$	&	3.61E-03	&	13.54 	&	3.61E-03	&	1.44 	&	11.93 	&	3.02 	\\
& $2^{14}$	&	1.20E-03	&	86.68 	&	1.20E-03	&	1.59 	&	22.35 	&	3.21 	\\
& $2^{15}$	&	3.86E-04	&	530.07 	&	3.86E-04	&	1.63 	&	100.98 	&	3.30 	\\ \hline
\end{tabular}
\end{table}

Table~\ref{tab5} illustrates that the schemes~(\ref{scheme}) and the equation settings provide convergent approximations for the distribution function. With the same number of temporal grid points $M$, longer CPU times but better accuracy and convergence orders are performed by using approximation~(\ref{2ndapp}) than by using approximations~(\ref{ap1}) and~(\ref{ap2}). For the GMRES solver, the iteration numbers are observed small, and the CPU times are smaller than that of the direct solver with increasing differences, demonstrating the efficiency of the proposed scheme with iterative solver. Besides, observing that the setting $\epsilon=2h=O(\tau^{2/(3-\alpha)})=O(\tau^{0.8})$ induces convergence of order about $1.6$ when using approximations~(\ref{ap1}) and~(\ref{ap2}), we remark here that the initial condition $u_{0}(x)$ probably introduces an error of $O(\epsilon^{2})$, dominating the order $3-\alpha$ convergence. Rigorous argument for this claim is to be studied.

\section{Conclusion}
In this paper, a second order scheme for a space fractional convection equation with order $\alpha\in(0,1)$ is first studied regarding to the generating function. By exploring the unconditional stability and consistency, two related approximations are proposed, with the resulting numerical schemes determined to be unconditionally stable and convergent with order $3-\alpha$ for a wide range of $\alpha$. Numerical results agree with the discussions.

\section{Acknowledgments}
This work was partially supported by the research grants MYRG2022-00262-FST, MYRG-GRG2023-00181-FST-UMDF from University of Macau;
and the Science and Technology Development Fund, Macau SAR under Funding Scheme for Postdoctoral Researchers of Higher Education Institutions 2021 (File No. 0032/APD/2021).


\begin{thebibliography}{00}
\bibitem{Phy}
Barkai, E., Metzler, R., Klafter, J.: \textit{From continuous time random walks to
the fractional Fokker-Planck equation, Phys. Rev. E}, \textbf{61} (2000) 132-138.
\url{https://doi.org/10.1103/PhysRevE.61.132}.

\bibitem{Erdelyi}
Bateman, H., Erd\'{e}lyi, A., \textit{Higher Transcendental Functions, McGraw-Hill,
New York, 1953.}

\bibitem{RchanJin}
Chan, R., Jin, X., \textit{An Introduction to Iterative Toeplitz Solvers, SIAM,
Philadelphia, 2007.}

\bibitem{Super}
Chechkin, A. V., Gorenflo, R., Sokolov. I. M.: \textit{Retarding subdiffusion and accelerating
superdiffusion governed by distributed-order fractional diffusion
equations, Phys. Rev. E}, \textbf{66} (2002). \url{https://doi.org/10.1103/PhysRevE.66.046129}.

\bibitem{TSFDE1}
Chen, M., Deng, W., Wu, Y.: \textit{Superlinearly convergent algorithms for the
two-dimensional space-time Caputo-Riesz fractional diffusion equation,
Appl. Numer. Math.}, \textbf{70} (2013) 22-41. \url{https://doi.org/10.1016/j.apnum.2013.03.006}.

\bibitem{Feller}
Feller, V., \textit{An Introduction to Probability Theory and Its Applications,
Wiley, New York, 1971, vol. 2.}

\bibitem{Foge}
Fogedby, H. C.: \textit{L\'{e}vy flights in random environments, Phys. Rev. Lett.}, \textbf{73}
(1994) 2517-2520. \url{https://doi.org/10.1103/PhysRevLett.73.2517}.

\bibitem{Sousa}
Jesus, C., Sousa, E.: \textit{Numerical solutions for asymmetric L\'{e}vy flights, Numer.
Algorithms}, \textbf{87} (2021) 967-999. \url{https://doi.org/10.1007/s11075-020-00995-6}.

\bibitem{TFDE2}
Jiang, S., Zhang, J., Zhang, Q., Zhang, Z.: \textit{Fast evaluation of the Caputo
fractional derivative and its applications to fractional diffusion equations,
Commun.Comput.Phys.}, \textbf{21}(3) (2017) 650-678. \url{https://doi.org/10.4208/cicp.OA-2016-0136}.

\bibitem{Jinbook}
Jin, X., \textit{Preconditioning Techniques for Toeplitz Systems, Higher Education
Press, Beijing, 2010.}

\bibitem{Sokolov}
Klages, R., Radons, G., Sokolov, I., \textit{Anomalous Transport: Foundations and
Applications, Wiley-VCH, Weinheim, 2008.}

\bibitem{Lang}
Lang, S., \textit{Complex Analysis, Springer, New York, 1999.}

\bibitem{LiZeng}
Li, C., Zeng, F., \textit{Numerical Methods for Fractional Calculus, Chapman and
Hall/CRC, New York, 2015.}

\bibitem{Liu}
Liu, F., Zhuang, P., Liu, Q., \textit{Numerical Methods for Fractional Partial Differential
Equations and Their Applications, Science Press, Beijing, 2015, (in
Chinese).}

\bibitem{MR}
Magin, R. L., Abdullah, O., Baleanu, D., Zhou, X. J.: \textit{Anomalous diffusion expressed
through fractional order differential operators in the Bloch-Torrey
equation, J. Mag. Res.}, \textbf{190} (2008) 255-270. \url{https://doi.org/10.1016/j.jmr.2007.11.007}.

\bibitem{Bio}
Magin, R. L., Ingo, C., Colon-Perez, L., Triplett, W., Mareci, T. H.: \textit{Characterization
of anomalous diffusion in porous biological tissues using fractional
order derivatives and entropy, Microporous Mesoporous Mater.}, \textbf{178} (2013)
39-43. \url{https://doi.org/10.1016/j.micromeso.2013.02.054}.

\bibitem{Hydro}
Meerschaert, M. M., Foufoula-Georgiou, E., Viparelli, E., Parker, G.: \textit{Normal
and anomalous diffusion of gravel tracer particles in rivers, J. Geophys. Res.
Earth. Surf.}, \textbf{115} (F00A12) (2010) 2559-2567. \url{https://doi.org/10.1029/2008JF001222}.

\bibitem{Meerschaert}
Meerschaert, M. M., Sikorskii, A., \textit{Stochastic Models for Fractional Calculus,
deGruyter, Berlin, 2012.}

\bibitem{SFDE1}
Meerschaert, M. M., Tadjeran, C.: \textit{Finite difference approximations for fractional
advection-dispersion flow equations, J. Comput. Appl. Math.}, \textbf{172}
(2004) 65-77. \url{https://doi.org/10.1016/j.cam.2004.01.033}.

\bibitem{Padash}
Padash, A., Chechkin, A., Dybiec, B., Magdziarz, M., Shokri, B., Metzler, R.: \textit{
First passage time moments of asymmetric L\'{e}vy flights, J. Phys. A: Math.
Theor.}, \textbf{53} (2020) 275002. \url{https://doi.org/10.1088/1751-8121/ab9030}.

\bibitem{Podlubny}
Podlubny, I., \textit{Fractional Differential Equations, Academic Press, New York,
1999.}

\bibitem{Zaslavsky}
Saichev, A. I., Zaslavsky, G. M.: \textit{Fractional kinetic equations: solutions and applications,
Chaos}, \textbf{7} (1997) 753-764. \url{https://doi.org/10.1063/1.166272}.

\bibitem{Finance}
Scalas, E., Gorenflo, R., Mainardi, F.: \textit{Fractional calculus and continuoustime
finance, Phys. A}, \textbf{284} (2000) 376-384. \url{https://doi.org/10.1016/S0378-4371(00)00255-7}.

\bibitem{Porous}
Schumer, R., Benson, D., Meerschaert, M. M., Wheatcraft, S. W.: \textit{Eulerian
derivation of the fractional advection-dispersion equation, J. Contam. Hydrol.},
\textbf{48}(1-2) (2001) 69-88. \url{https://doi.org/10.1016/S0169-7722(00)00170-4}.

\bibitem{Turb}
Shlesinger, M. F., West, B. J., Klafter, J.: \textit{L\'{e}vy dynamics of enhanced diffusion:
Application to turbulence, Phys. Rev. Lett.}, \textbf{58} (1987) 1100-1103.
\url{https://doi.org/10.1103/PhysRevLett.58.1100}.

\bibitem{SunZhang}
Sun, H., Zhang, Y., Baleanu, D., Chen, W., Chen, Y.: \textit{A new collection of
real world applications of fractional calculus in science and engineering,
Commun. Nonlinear Sci. Numer. Simul.}, \textbf{64} (2018) 213-231. \url{https://doi.org/10.1016/j.cnsns.2018.04.019}.

\bibitem{SunGao}
Sun, Z., Gao, G., \textit{Finite Difference Method for Fractional Differential Equations,
Science Press, Beijing, 2015, (in Chinese).}

\bibitem{TFDE1}
Sun, Z., Wu, X.: \textit{A fully discrete difference scheme for a diffusion-wave system,
Appl. Numer. Math.}, \textbf{56}(2) (2006) 193-209. \url{https://doi.org/10.1016/j.apnum.2005.03.003}.

\bibitem{WSGD}
Tian, W., Zhou, H., Deng, W.: \textit{A class of second order difference approximations
for solving space fractional diffusion equations, Math. Comput.}, \textbf{84}
(2015) 1703-1727. \url{https://doi.org/10.1090/S0025-5718-2015-02917-2}.

\bibitem{TSFDE2}
Vong, S., Lyu, P., Chen, X., Lei, S.: \textit{High order finite difference method
for time-space fractional differential equations with Caputo and Riemann-
Liouville derivatives, Numer. Algorithms}, \textbf{72} (2016) 195-210. \url{https://doi.org/10.1007/s11075-015-0041-3}.

\bibitem{SDO}
Wang, X., Liu, F., Chen, X.: \textit{Novel second-order accurate implicit numerical
methods for the Riesz space distributed-order advection-dispersion equations,
Adv. Math. Phys.}, \textbf{2015} (2015) 1-14. \url{https://doi.org/10.1155/2015/590435}.

\bibitem{FE2}
Zayernouri, M., Karniadakis, G. E.: \textit{Discontinuous spectral element methods
for time- and space-fractional advection equations, SIAM J. Sci. Comput.},
\textbf{36} (2014) B684-B707. \url{https://doi.org/10.1137/130940967}.

\bibitem{FE1}
Zhang, H., Liu, F., Anh, V.: \textit{Galerkin finite element approximation of symmetric
space-fractional partial differential equations, Appl. Math. Comput.},
\textbf{217}(6) (2010) 2534-2545. \url{https://doi.org/10.1016/j.amc.2010.07.066}.

\bibitem{Imag}
Zhang, J., Chen, K.: \textit{A total fractional-order variation model for image
restoration with non-homogeneous boundary conditions and its numerical
solution, SIAM J. Imaging Sci.}, \textbf{8}(4) (2015) 2487-2518. \url{https://doi.org/10.1137/14097121X}.

\bibitem{SFDE2}
Zhou, H., Tian, W., Deng, W., \textit{Quasi-compact finite difference schemes for
space fractional diffusion equations, J. Sci. Comput.}, \textbf{56} (2013) 45-66. 
\url{https://doi.org/10.1007/s10915-012-9661-0}.

\end{thebibliography}

\end{document}